\newtheorem{remark}{Remark}[section]
\newcommand{\bfx}{{\bf x}}
\newcommand{\bfy}{{\bf y}}
\newcommand{\bfz}{{\bf z}}
\newcommand{\bfu}{{\bf u}}
\newcommand{\bfa}{{\bf a}}
\newcommand{\bfb}{{\bf b}}
\newcommand{\va}{{\mathbf{a}}}
\newcommand{\vb}{{\mathbf{b}}}
\newcommand{\vd}{{\mathbf{d}}}
\newcommand{\vr}{{\mathbf{r}}}
\newcommand{\vw}{{\mathbf{w}}}
\newcommand{\vI}{{\mathbf{I}}}
\newcommand{\cL}{{\mathcal{L}}}
\newcommand{\cN}{{\mathcal{N}}}
\newcommand{\cX}{{\mathcal{X}}}
\newcommand{\cZ}{{\mathcal{Z}}}
\newcommand{\Dx}{\Delta \bfx}
\newcommand{\Dlambda}{\Delta \lambda}
\newcommand{\RR}{\mathbb{R}}
\newcommand{\st}{{\text{s.t.}}} % subject to
\newcommand{\dom}{{\mathrm{dom}}} % domain
\DeclareMathOperator*{\argmin}{arg\,min}
\newcommand{\bc}{\begin{center}}
\newcommand{\ec}{\end{center}}
\newcommand{\bdm}{\begin{displaymath}}
\newcommand{\edm}{\end{displaymath}}
\newcommand{\beq}{\begin{equation}}
\newcommand{\eeq}{\end{equation}}
\newcommand{\bfl}{\begin{flushleft}}
\newcommand{\efl}{\end{flushleft}}
\newcommand{\bt}{\begin{tabbing}}
\newcommand{\et}{\end{tabbing}}
\newcommand{\beqn}{\begin{align}}
\newcommand{\eeqn}{\end{align}}
\newcommand{\beqs}{\begin{align*}} % no equation numbers
\newcommand{\eeqs}{\end{align*}}  % no equation numbers
\newtheorem{assumption}{Assumption}
\title{Parallel Multi-Block ADMM with \MakeLowercase{$o(1/k)$} Convergence}
\author{Wei Deng\thanks{Department of Computational and Applied Mathematics,
Rice University, Houston, TX 77005. ({\tt wei.deng@rice.edu})}
\and Ming-Jun Lai\thanks{Department of Mathematics, University of Georgia, Athens, GA 30602.
({\tt mjlai@math.uga.edu})}
\and Zhimin Peng$^\ddag$
\and Wotao Yin\thanks{Department of Mathematics, University of California,
Los Angeles, CA 90095. ({\tt zhimin.peng / wotaoyin@math.ucla.edu})}}
\begin{document}

\maketitle

\begin{abstract}
This paper introduces a parallel and distributed extension to the alternating direction method of multipliers (ADMM) for solving convex problem:
\begin{align*}
\textrm{minimize} ~~& f_1(\bfx_1) + \cdots + f_N(\bfx_N)\\
\textrm{subject to}~~ & A_1 \bfx_1 ~+ \cdots +
A_N\bfx_N =c,\\
& \bfx_1\in \cX_1,~\ldots, ~\bfx_N\in \cX_N.
\end{align*}
The algorithm decomposes the original problem into $N$ smaller subproblems and solves them in parallel at each iteration. This Jacobian-type algorithm is well suited for distributed computing and  is particularly attractive for solving certain large-scale problems. 

This paper introduces a few novel results. Firstly, it shows that extending  ADMM straightforwardly from the classic Gauss-Seidel setting to the  Jacobian setting,  from 2 blocks to  $N$ blocks, will preserve convergence if matrices $A_i$ are mutually near-orthogonal and have full column-rank. Secondly, for general matrices $A_i$, this paper proposes to add proximal terms of different kinds  to the $N$ subproblems so that  the subproblems can be solved in flexible and efficient ways and the algorithm converges globally at a rate of $o(1/k)$. Thirdly, a simple technique is introduced to improve some existing convergence rates from $O(1/k)$ to $o(1/k)$.

In practice, some conditions in our convergence theorems are conservative. Therefore, we introduce a  strategy for dynamically tuning the parameters in the algorithm, leading to substantial acceleration of the convergence in practice. Numerical results are presented to demonstrate the efficiency of the proposed method in comparison with several existing parallel algorithms.

We implemented our algorithm on  Amazon EC2, an on-demand public computing cloud, and report its performance on very large-scale basis pursuit problems with distributed data.
\end{abstract}

\begin{keywords}
alternating direction method of multipliers, ADMM, parallel and distributed computing, convergence rate
\end{keywords}

\pagestyle{myheadings}
\thispagestyle{plain}
\markboth{Wei Deng, Ming-Jun Lai, Zhimin Peng, Wotao Yin}{Parallel multi-block ADMM with $o(1/k)$ convergence}

\section{Introduction}
We consider the following convex optimization problem with $N~(N\geq 2)$ blocks of variables:
\begin{equation}\label{multivariateOptimization}
\min_{\bfx_1,\bfx_2,\ldots,\bfx_N} ~\sum_{i=1}^N f_i(\bfx_i) \quad\st ~\sum_{i=1}^N
A_i\bfx_i =c,
\end{equation}
where $\bfx_i\in \RR^{n_i}$, $A_i\in\RR^{m\times n_i}$, $c\in\RR^m$, and $f_i:\RR^{n_i}\rightarrow (-\infty,+\infty]$ are closed proper convex functions, $i=1,2,\ldots,N$.
If an individual block is subject to constraint $\bfx_i\in \cX_i$, where $\cX_i\subseteq \RR^{n_i}$ is a nonempty closed convex set, it can be
incorporated in the objective function $f_i$ using the indicator function:
\begin{equation} \label{indicator}
I_{\cX_i}(\bfx_i)=\left\{
\begin{array}{cl}
0 &\mbox{if}~\bfx_i \in \cX_i,\\
+\infty &\mbox{otherwise.}
\end{array}
\right.
\end{equation}
The problem \eqref{multivariateOptimization} is also referred to as an \textit{extended monotropic programming problem} \cite{bertsekas2008extended}. The special case that each $\bfx_i$ is a scalar (i.e., $n_i=1$) is called a \textit{monotropic programming problem} \cite{rockafellar1981monotropic}.
Such optimization problems arise from a broad spectrum of applications including numerical partial differential
equations, signal and image processing, compressive sensing, statistics and machine learning. See \cite{glowinski1984numerical,Bertsekas-Tsitsiklis-book-97,boyd2010distributed,tao2011recovering,chandrasekaran2012latent,peng2012rasl,mota2013d,wei20131,parikh2013block} and the references therein for a number of examples.

In this paper, we focus on \textit{parallel and distributed} optimization algorithms for solving the problem \eqref{multivariateOptimization}. Since both of the objective function and constraints of \eqref{multivariateOptimization} are summations of terms on individual $\bfx_i$'s (we call them \textit{separable}), the problem can be decomposed into $N$ smaller subproblems, which can be solved in a parallel and distributed manner.

\subsection{Literature review}
A simple distributed algorithm for solving \eqref{multivariateOptimization} is \textit{dual decomposition} \cite{everett1963generalized}, which is essentially a \textit{dual ascent method} or \textit{dual subgradient method} \cite{shor1985minimization} as follows. Consider the Lagrangian for problem \eqref{multivariateOptimization}:
\beq \label{Lagrange}
\cL(\bfx_1,\ldots,\bfx_N,\lambda)=\sum_{i=1}^N f_i(\bfx_i)-\lambda^\top
\left(\sum_{i=1}^N A_i\bfx_i -c\right)
\eeq
where $\lambda\in \RR^m$ is the Lagrangian multiplier or the dual variable.
The method of dual decomposition iterates as follows: for $k\geq 1$,
\beq
\left\{
\begin{array}{l}
(\bfx_1^{k+1},\bfx_2^{k+1},\ldots,\bfx_N^{k+1}) =\argmin_{\{\bfx_i\}}~
\cL(\bfx_1,\ldots,\bfx_N,\lambda^k),\medskip\\
\lambda^{k+1}  = \lambda^k - \alpha_{k}\left(\sum_{i=1}^N A_i\bfx_i^{k+1} -c\right),
\end{array}\right.
\eeq
where $\alpha_{k} > 0$ is a step-size.
Since all the $\bfx_i$'s are separable in the Lagrangian function \eqref{Lagrange}, the $\bfx$-update step reduces to solving $N$ individual $\bfx_i$-subproblems:
\beq
\bfx_i^{k+1} = \argmin_{\bfx_i} f_i(\bfx_i)-\langle\lambda^{k}, A_i \bfx_i\rangle, \mbox{ for } i=1,2,\ldots,N,
\eeq
and thus they can be carried out in parallel.
With suitable choice of $\alpha_k$ and certain assumptions, dual decomposition is guaranteed to converge to an
optimal solution \cite{shor1985minimization}. However, the convergence of such subgradient method often tends to be slow in practice. Its convergence rate for general convex problems is $O(1/\sqrt{k})$.

Another effective distributed approach is based on the \textit{alternating direction method of multipliers} (ADMM). ADMM was introduced in \cite{gabay1976dual,glowinski1975approximation} to solve the special case of problem \eqref{multivariateOptimization}
with two blocks of variables ($N=2$). It utilizes the \textit{augmented Lagrangian} for \eqref{multivariateOptimization}:
\beq \label{AL}
\cL_{\rho}(\bfx_1,\ldots,\bfx_N,\lambda)=\sum_{i=1}^N f_i(\bfx_i)-\lambda^\top
\left(\sum_{i=1}^N A_i\bfx_i -c
\right)+\frac{\rho}{2}\left\|\sum_{i=1}^N A_i\bfx_i -c\right\|_2^2,
\eeq
which incorporates a quadratic penalty of the constraints (with a parameter $\rho>0$) into the Lagrangian.
In each iteration, the augmented Lagrangian is minimized over $\bfx_1$ and $\bfx_2$ separately, one after the other, followed by a dual update for $\lambda$. The iterative scheme of ADMM is outlined below:
\beq
\left\{
\begin{array}{l}
\bfx_1^{k+1}=\argmin_{\bfx_1}\cL_{\rho}(\bfx_1,\bfx_2^k,\lambda^k),\medskip\\
\bfx_2^{k+1}=\argmin_{\bfx_2}\cL_{\rho}(\bfx_1^{k+1},\bfx_2,\lambda^k),\medskip\\
\lambda^{k+1}=\lambda^k-\rho(A_1\bfx_1^{k+1}+A_2\bfx_2^{k+1}-c).
\end{array}\right.
\eeq
To solve the problem \eqref{multivariateOptimization} with $N\geq 3$ using ADMM, one can first convert the multi-block problem into an equivalent two-block problem via variable splitting \cite{Bertsekas-Tsitsiklis-book-97,boyd2010distributed,wang2013solving}:
\begin{equation} \label{split}
\begin{split}
\min_{\{\bfx_i\},\{\bfz_i\}} &\quad\sum_{i=1}^N f_i(\bfx_i)+I_{\cZ}(\bfz_1,\ldots,\bfz_N)\\
\st &\quad A_i\bfx_i-\bfz_i=\frac{c}{N},~\forall i=1,2,\ldots,N,
\end{split}
\end{equation}
where $I_{\cZ}$ is a indicator function defined by \eqref{indicator}, and the convex set $\cZ$ is given by
\[\cZ=\left\{(\bfz_1,\ldots,\bfz_N): \sum_{i=1}^N\bfz_i=0\right\}.\]
The variables in \eqref{split} can be grouped into two blocks: $\bfx:=(\bfx_1,\ldots,\bfx_N)$ and $\bfz:=(\bfz_1,\ldots,\bfz_N)$, so that ADMM can directly apply. The augmented Lagrangian for \eqref{split} is given by
\beq
\cL_{\rho}(\bfx,\bfz,\lambda)=\sum_{i=1}^N f_i(\bfx_i)+I_{\cZ}(\bfz)-\sum_{i=1}^N
\lambda_i^\top\left(A_i\bfx_i-\bfz_i-\frac{c}{N}\right)+\frac{\rho}{2}\sum_{i=1}^N\left\|A_i\bfx_i-\bfz_i-\frac
{c}{N}\right\|_2^2.
\eeq
Since all the $\bfx_i$'s are now fully decoupled, the resulting $\bfx$-subproblem decomposes into $N$ individual $\bfx_i$-subproblems, which can be carried out in parallel.
The resulting $\bfz$-subproblem is a simple quadratic problem:
\beq
\bfz^{k+1}=\argmin_{\{\bfz:\sum_{i=1}^N\bfz_i=0\}}~\sum_{i=1}^N\frac{\rho}{2}\left\|A_i\bfx_i-\bfz_i-\frac
{c}{N}-\frac{\lambda_i^k}{\rho}\right\|_2^2,
\eeq
which admits a closed-form solution. The details of the algorithm are summarized below:

\begin{algorithm}[H]
Initialize $\bfx^0,~\lambda^0,~\rho>0$\;
\For{$k=0,~1,\ldots$}{
Update $\bfz_i$ then $\bfx_i$ for $i=1,\ldots,N$ \emph{in parallel} by: 
$\bfz^{k+1}_i=\left(A_i\bfx_i^k-\frac{c}{N}-\frac{\lambda^k_i}{\rho}\right)-\frac{1}{N}\left\{\sum_{j=1}^N
A_j\bfx_j^k-\frac{c}{N}-\frac{\lambda^k_j}{\rho}\right\}$\;
$\bfx_i^{k+1}=\argmin_{\bfx_i} f_i(\bfx_i)+\frac{\rho}{2}\left\|A_i\bfx_i-\bfz_i^{k+1}-\frac{c}{N}-\frac{\lambda^k_i}{\rho}\right\|_2^2$\;
Update $\lambda_i^{k+1}=\lambda_i^{k}-\rho\left(A_i\bfx^{k+1}_i-\bfz^{k+1}_i-\frac{c}{N}\right),\forall i=1,\ldots,N$.}
\caption{Variable Splitting ADMM (VSADMM)} \label{alg:vsadmm}
\end{algorithm}
The distributed ADMM approach based on \eqref{split}, by introducing splitting variables, \emph{substantially increases the number of variables and constraints} in the problem, especially when $N$ is large.

We prefer to extending the ADMM framework for solving \eqref{multivariateOptimization} rather than first converting \eqref{multivariateOptimization} to a two-block problem and then applying the classic ADMM. A natural extension is to simply replace the two-block alternating minimization scheme by a sweep of Gauss-Seidel update, namely, update $\bfx_i$  for  $i=1,2,\ldots,N$ sequentially as follows:
\begin{align} \label{G-S update}
\nonumber
\bfx_i^{k+1}&=\argmin_{\bfx_i}~\cL_{\rho}(\bfx_1^{k+1},\ldots,\bfx_{i-1}^{k+1},\bfx_i,\bfx_{i+1}^k,\ldots,\bfx_N^{k},\lambda^k)\\
&=\argmin_{\bfx_i}~f_i(\bfx_i)+\frac{\rho}{2}\left\|\sum_{j<i} A_j\bfx_j^{k+1}+A_i\bfx_i+\sum_{j>i}A_j\bfx_j^k-c-\frac{\lambda^k}{\rho}\right\|_2^2.
\end{align}
Such \textit{Gauss-Seidel ADMM} (Algorithm \ref{alg:admm}) has been considered lately, e.g., in \cite{he2012alternating,luo2012linear}.
However, it has been shown that the algorithm may not  converge for $N\geq 3$ \cite{chen2013direct}. Although lack of convergence guarantee, some empirical studies show that Algorithm \ref{alg:admm} is still very effective at solving many practical problems
(see, e.g., \cite{peng2012rasl,tao2011recovering,wang2013solving}).

\begin{algorithm}[H]
Initialize $\bfx^0,~\lambda^0,~\rho>0$\;
\For{$k=0,~1,\ldots$}{
Update $\bfx_i$ for $i=1,\ldots,N$ \emph{sequentially} by:
$\bfx_i^{k+1}=\min_{\bfx_i} f_i(\bfx_i)+\frac{\rho}{2}\left\|\sum_{j<i}
A_j\bfx_j^{k+1}+A_i\bfx_i+\sum_{j>i}A_j\bfx_j^k-c-\frac{\lambda^k}{\rho}\right\|_2^2$;
Update $\lambda^{k+1}=\lambda^{k}-\rho\left(\sum_{i=1}^N A_i\bfx_i^{k+1} -c\right)$.}
\caption{Gauss-Seidel ADMM} \label{alg:admm}
\end{algorithm}
A disadvantage of Gauss-Seidel ADMM is that the blocks are updated one after another, which is not amenable for parallelization.

\subsection{Jacobian scheme}
To overcome this disadvantage, this paper considers using a Jacobi-type scheme that updates all the $N$ blocks in parallel:
\begin{align} \label{Jacobian update}
\nonumber
\bfx_i^{k+1}&=\argmin_{\bfx_i}~\cL_{\rho}(\bfx_1^{k},\ldots,\bfx_{i-1}^{k},\bfx_i,\bfx_{i+1}^k,\ldots,\bfx_N^
{k},\lambda^k)\\
&=\argmin_{\bfx_i}~f_i(\bfx_i)+\frac{\rho}{2}\left\|A_i\bfx_i+\sum_{j\ne i}A_j\bfx_j^k-c-\frac{\lambda^k}{\rho}
\right\|_2^2,~\forall i=1,\ldots,N.
\end{align}
We refer to it as \textit{Jacobian ADMM}; see Algorithm \ref{alg:Jacobian-admm}.

\begin{algorithm}[H]
Initialize $\bfx^0,~\lambda^0,~\rho>0$\;
\For{$k=0,~1,\ldots$}{
Update $\bfx_i$ for $i=1,\ldots,N$ \emph{in parallel} by:
$\bfx_i^{k+1}=\argmin_{\bfx_i}f_i(\bfx_i)+\frac{\rho}{2}\left\|A_i\bfx_i+\sum_{j\ne i}A_j\bfx_j^k-c-\frac{\lambda^k}{\rho}\right\|_2^2$\;
Update $\lambda^{k+1}=\lambda^{k}-\rho\left(\sum_{i=1}^N A_i\bfx_i^{k+1} -c\right)$.}
\caption{Jacobian ADMM} \label{alg:Jacobian-admm}
\end{algorithm}
The parallelization comes with a cost: this scheme is more likely to diverge than the Gauss-Seidel scheme for the same parameter $\rho$. In fact, it may diverge even in the two-block case; see \cite{he2013full} for such an example. In order to guarantee its convergence, either additional assumptions or modifications to the algorithm must be made.

In Section \ref{sec:suf}, we show that if matrices $A_i$ are mutually near-orthogonal and have full column-rank, then Algorithm 3 converges globally. For general cases, a few variants of Jacobian ADMM have been proposed in \cite{he2009parallel,he2013full} by taking additional correction steps at every iteration.

In this paper, we propose \textit{Proximal Jacobian ADMM}; see Algorithm \ref{Parallel_N}). Compared with Algorithm \ref{alg:Jacobian-admm}, there are a proximal term $\frac{1}{2}\|\bfx_i-\bfx_i^k\|_{P_i}^2$ for each $\bfx_i$-subproblem and  a damping parameter $\gamma>0$ for the update of $\lambda$. Here $P_i\succeq 0$ is some symmetric and positive semi-definite matrix and we let $\|\bfx_i\|^2_{P_i}:= \bfx_i^\top P_i\bfx_i$.

\begin{algorithm}[H]
Initialize: $\bfx^0_i~(i=1,2,\ldots, N)$ and $\lambda^0$\;
\For{$k=0,1,\ldots$}{
{Update $\bfx_i$ for $i=1,\ldots,N$ \emph{in parallel} by:}
$\bfx_i^{k+1} =\argmin_{\bfx_i} f_i(\bfx_i)+\frac{\rho}{2}\left\|A_i\bfx_i+\sum_{j\ne
i}A_j\bfx_j^k-c-\frac{\lambda^k}{\rho}\right\|_2^2+\frac{1}{2}\left\|\bfx_i-\bfx_i^k\right\|
_{P_i}^2$\;

Update $\lambda^{k+1}=\lambda^k -\gamma\rho(\sum_{i=1}^N A_i \bfx_i^{k+1}- c)$.
}
\caption{Proximal Jacobian ADMM}
\label{Parallel_N}
\end{algorithm}
The proposed algorithm has a few advantages.
First of all, as we will show, it enjoys global convergence as well as an $o(1/k)$ convergence rate under conditions on  $P_i$ and $\gamma$.
Secondly, when the $\bfx_i$-subproblem is not strictly convex, adding the proximal term can make the subproblem strictly or strongly convex, making it more stable.
Thirdly, we provide multiple choices for matrices $P_i$ with which the subproblems  can be made easier to solve.
Specifically, the $\bfx_i$-subproblem contains a quadratic term $\frac{\rho}{2}\bfx_i A_i^\top A_i\bfx_i$. When   $A_i^\top A_i$ is ill-conditioned or computationally expensive to invert, one can let $P_i = D_i-\rho A_i^\top A_i$, which cancels the quadratic term $\frac{\rho}{2}\bfx_i A_i^\top A_i\bfx_i$ and adds $\frac{1}{2}\bfx_i D_i\bfx_i$. The matrix $D_i$ can be chosen as some well-conditioned and simple matrix (e.g., a diagonal matrix), thereby leading to an easier subproblem.

Here we mention two commonly used choices of $P_i$:
\begin{itemize}
\item $P_i=\tau_i \vI~(\tau_i>0)$: This corresponds to the standard \textit{proximal method}.
\item $P_i=\tau_i \vI-\rho A_i^\top A_i~(\tau_i>0)$: This corresponds to the
\textit{prox-linear method} \cite{chen1994proximal}, which linearizes the quadratic penalty term of augmented Lagrangian at the current point $\bfx_i^k$ and adds a proximal term.
More specifically, the prox-linear $\bfx_i$-subproblem is given by
\beq
\bfx_i^{k+1}=\argmin_{\bfx_i}~f_i(\bfx_i)+\left\langle \rho A_i^\top(A\bfx^k-c-
\lambda^k/\rho),\bfx_i\right\rangle+\frac{\tau_i}{2}
\left\|\bfx_i-\bfx_i^k\right\|^2.
\eeq
It essentially uses an identity matrix $\tau_i\vI$ to approximate the Hessian matrix $\rho A_i^\top A_i$ of the quadratic penalty term.
\end{itemize}
More choices of $P_i$ have also been discussed in \cite{zhang2011unified,deng2012linear}.

\subsection{Summary of Contributions}
This paper introduces a few novel results from different perspectives.
Firstly, we propose \textit{Proximal Jacobian ADMM}, which is suitable for parallel and distributed computing.
The use of flexible proximal terms make it possible to solve its subproblems in different ways, important for easy coding and fast computation. We establish its convergence at a rate of $o(1/k)$.
Our numerical results on the exchange problem and $\ell_1$-minimization problem show that the proposed algorithm achieves competitive performance, in comparison with several existing parallel algorithms.

The second contribution is a sufficient condition of convergence for the extension of classic  ADMM with a Jacobian update scheme.

The third contribution is the improvement of the established convergence rate of $O(1/k)$ for the standard ADMM to $o(1/k)$ by a simple proof.
This technique can be also applied to various other algorithms, and some existing convergence rates of $O(1/k)$ can be slightly improved to $o(1/k)$ as well.

\subsection{Preliminary, Notation, and Assumptions}
To simplify the notation in this paper, we introduce
\[
\bfx:=\begin{pmatrix}
\bfx_1\\
\vdots\\
\bfx_N
\end{pmatrix}\in\RR^{n},
~A:=\begin{pmatrix}
A_1,\ldots,A_N
\end{pmatrix}\in \RR^{m\times n},\\
~\bfu:=\begin{pmatrix}
\bfx\\
\lambda
\end{pmatrix}\in\RR^{n+m},\]
where $$n=\sum_{i=1}^N n_i.$$
We let $\langle\cdot,\cdot\rangle$ and $\|\cdot\|$ denote the standard inner product and $\ell_2$-norm $\|\cdot\|_2$, respectively, in the Euclidean space. For a matrix $M\in \RR^{l\times l}$, $\|M\|$ denotes the spectral norm, i.e., the largest singular value of $M$.
For a positive definite matrix $G\in \RR^{l\times l}$, we define the \textit{$G$-norm} as follows:
\beq
\|\bfz\|_G:=\sqrt{\bfz^\top G \bfz},~\forall \bfz\in \RR^l.
\eeq
If the matrix $G$ is positive semi-definite, then $\|\cdot\|_G$ is a semi-norm.

Throughout the paper, we make the following standard assumptions.
\begin{assumption} \label{ass:convex}
Functions $f_i:\RR^{n_i}\rightarrow (-\infty,+\infty]~(i=1,2,\ldots,N)$ are closed proper convex.
\end{assumption}

\begin{assumption}
There exists a saddle point $\bfu^*=(\bfx^*_1,\bfx^*_2,\ldots,\bfx^*_N,\lambda^*)$ to the problem \eqref{multivariateOptimization}. Namely, $\bfu^*$ satisfies the KKT conditions:
\begin{align}
\label{xsopt2}
A_i^\top\lambda^{\ast} &\in \partial f_i(\bfx_i^{\ast}),~\mbox{for}~i=1,\ldots,N,\\
\label{lsopt2}
A\bfx^*&=\sum_{i=1}^N A_i\bfx_i^{\ast} = c.
\end{align}
The optimality conditions \eqref{xsopt2} and \eqref{lsopt2} can be written in a more compact form by the following variational inequality \cite{he2013full}:
\beq \label{VI}
f(\bfx)-f(\bfx^*)+(\bfu-\bfu^*)^\top F(\bfu^*)\geq 0,~\forall u,
\eeq
where $f(\bfx):=\sum_i f_i(\bfx_i)$ and
\[
F(\bfu):=\begin{pmatrix}
-A_1^\top\lambda\\
\vdots\\
-A_N^\top\lambda\\
A\bfx-c
\end{pmatrix}.
\]
\end{assumption}

Let $\partial f_i(\bfx_i)$ denote the subdifferential of $f_i$ at $\bfx_i$:
\beq
\partial f_i(\bfx_i):=\left\{s_i\in\RR^{n_i}:~s_i^\top(\bfy_i-\bfx_i)\leq f_i(\bfy_i)-f_i(\bfx_i),~\forall
\bfy_i\in\dom f_i \right\}.
\eeq
We recall the following basic property for convex functions, which will be used several times in later sections.
\begin{lemma}[monotonicity of subdifferential]\label{lem:convex}
Under Assumption \ref{ass:convex}, for any $\bfx_i,\bfy_i\in \dom~f_i$, we have
\begin{equation} \label{convex}
(s_i-t_i)^\top(\bfx_i-\bfy_i)\geq 0,~\forall s_i\in\partial f_i(\bfx_i),~t_i\in\partial f_i(\bfy_i),
\end{equation}
for $i=1,2,\ldots,N$.
\end{lemma}
\begin{proof}
By definition, for any $s_i\in\partial f_i(\bfx_i)$ and $t_i\in\partial f_i(\bfy_i)$, we have
\begin{align*}
s_i^\top(\bfy_i-\bfx_i)&\leq f_i(\bfy_i)-f_i(\bfx_i),\\
~t_i^\top(\bfx_i-\bfy_i)&\leq f_i(\bfx_i)-f_i(\bfy_i).
\end{align*}
Adding these two inequalities together yields \eqref{convex}.
\end{proof}

In addition, we shall use an elementary lemma to improve the convergence rate from $O(1/k)$ to $o(1/k)$. Intuitively, the harmonic sequence $1/k$ is not summable, so a summable, nonnegative, monotonic sequence shall converge faster than $1/k$.
\begin{lemma} \label{lem:o}
If a sequence $\{a_k\}\subseteq\RR$ obeys: (1) $a_k\geq 0$; (2) $\sum_{k=1}^\infty a_k < +\infty$; (3) $a_k$ is monotonically non-increasing, then we have $a_k=o(1/k)$.
\end{lemma}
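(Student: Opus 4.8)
The plan is to prove the stronger, quantitative statement that $k\,a_k \to 0$, which is precisely what $a_k = o(1/k)$ means. The intuition behind the three hypotheses is that a nonnegative, non-increasing, summable sequence cannot remain as large as $c/k$ for any fixed $c>0$: if it did, the series would dominate a multiple of the non-summable harmonic series $\sum 1/k$ and thus diverge. To convert this intuition into a proof, I would bound a \emph{single} term $a_k$ from above by a suitably long \emph{block} of preceding terms, whose total is controlled by the convergence of the series.

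First I would exploit summability via the Cauchy criterion. Writing $S_n := \sum_{j=1}^{n} a_j$, hypothesis (2) says $S_n$ converges to some finite limit $S$. Consequently the ``half-window'' tail
\[
 T_n := \sum_{j=\lfloor n/2\rfloor+1}^{n} a_j = S_n - S_{\lfloor n/2\rfloor}
\]
tends to $0$ as $n\to\infty$, since both $n\to\infty$ and $\lfloor n/2\rfloor\to\infty$ force $S_n\to S$ and $S_{\lfloor n/2\rfloor}\to S$, so their difference vanishes.

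Next I would use monotonicity (hypothesis (3)) to bound $T_n$ from below in terms of $a_n$ alone. The block defining $T_n$ consists of $n-\lfloor n/2\rfloor=\lceil n/2\rceil\ge n/2$ indices $j\le n$, and by the non-increasing property each corresponding term satisfies $a_j\ge a_n$. Combining this with nonnegativity (hypothesis (1)) gives the squeeze
\[
 0\le \tfrac{n}{2}\,a_n \le \lceil n/2\rceil\,a_n \le T_n .
\]
Letting $n\to\infty$ and invoking $T_n\to 0$ from the previous step yields $n\,a_n\to 0$, i.e. $a_n=o(1/n)$, as claimed.

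I do not anticipate a genuine obstacle here; the single point requiring care is the choice of summation window. A naive attempt comparing $k\,a_k$ to the full tail $\sum_{j\ge k}a_j$ shows only that the tail is small and does not by itself recover a factor of $k$ in front of $a_k$. The key is to sum over a block whose length is proportional to $k$ (here from $\lfloor k/2\rfloor+1$ to $k$), so that the monotonicity bound carries the growing factor $k/2$ while the block sum still vanishes as a difference of convergent partial sums. The only remaining bookkeeping is the handling of the floor and ceiling functions (equivalently, the separate treatment of even and odd indices), which is routine.
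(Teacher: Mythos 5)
Your proof is correct and uses essentially the same argument as the paper: bound $a_n$ by the block sum $\sum_{j=\lfloor n/2\rfloor+1}^{n}a_j$ (the paper writes this as $k\cdot a_{2k}\le a_{k+1}+\cdots+a_{2k}\to 0$), which vanishes by the Cauchy criterion while monotonicity supplies the factor proportional to $n$. Your version is marginally more careful in that it handles odd and even indices uniformly rather than passing through the subsequence $a_{2k}$.
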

\begin{proof}
By the assumptions, we have
\[k\cdot a_{2k}\leq a_{k+1}+a_{k+2}+\cdots+a_{2k}\to 0\]
as $k\to +\infty$. Therefore, $a_k=o(1/k)$.
\end{proof}

\section{Convergence Analysis of the Proximal Jacobian ADMM}
In this section, we mainly study the convergence of  Proximal Jacobian ADMM (Algorithm \ref{Parallel_N}). We first show its convergence and then establish an $o(1/k)$ convergence rate in the same sense as in \cite{he2012non}. Furthermore, we discuss how to tune the parameter in order to make Proximal Jacobian ADMM more practical.

\subsection{Convergence}
To simplify the notation, we let
$$G_x:=\begin{pmatrix}
P_1+\rho A_1^\top A_1 &\quad &\quad\\
\quad &\ddots &\quad\\
\quad &\quad &P_N+\rho A_N^\top A_N
\end{pmatrix},
~G:=\begin{pmatrix}
G_x &\quad\\
\quad &\frac{1}{\gamma\rho}\vI
\end{pmatrix},
$$
where $\vI$ is the identity matrix of size $m \times m$.
In the rest of the section, we let $\{\bfu^k\}$ denote the sequence generated by  Proximal Jacobian ADMM from any initial point.
The analysis is based on bounding the error $\|\bfu^k-\bfu^*\|_G^2$ and estimating its decrease, motivated by the works \cite{he2012on,deng2012linear,he2013full}.
\begin{lemma} \label{lem:uopt}
For $k\ge 1$, we have
\beq \label{uopt}
\|\bfu^k-\bfu^*\|_G^2-\|\bfu^{k+1}-\bfu^*\|_G^2\geq h(\bfu^k,~\bfu^{k+1}),
\eeq
where
\beq
h(\bfu^k,~\bfu^{k+1}):=\|\bfx^k-\bfx^{k+1}\|_{G_x}^2+\frac{2-\gamma}{\rho\gamma^2}\|\lambda^k-\lambda^{k+1}
\|^2+\frac{2}{\gamma}(\lambda^k-\lambda^{k+1})^\top A(\bfx^k- \bfx^{k+1}).
\eeq
\end{lemma}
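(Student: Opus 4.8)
The plan is to run the standard ``contraction'' argument for ADMM-type methods (as in \cite{he2013full,deng2012linear}): combine the first-order optimality conditions of the $\bfx_i$-subproblems with the KKT conditions \eqref{xsopt2}--\eqref{lsopt2} (equivalently the variational inequality \eqref{VI}), and then repackage everything into $G$-weighted distances using the elementary identity
\[
\|\bfu^k-\bfu^*\|_G^2-\|\bfu^{k+1}-\bfu^*\|_G^2=\|\bfu^k-\bfu^{k+1}\|_G^2+2(\bfu^k-\bfu^{k+1})^\top G(\bfu^{k+1}-\bfu^*).
\]
Since $\|\bfu^k-\bfu^{k+1}\|_G^2=\|\bfx^k-\bfx^{k+1}\|_{G_x}^2+\frac{1}{\gamma\rho}\|\lambda^k-\lambda^{k+1}\|^2$ is already of the desired shape, the whole task reduces to producing a lower bound for the cross term, which splits as $(\bfx^k-\bfx^{k+1})^\top G_x(\bfx^{k+1}-\bfx^*)+\frac{1}{\gamma\rho}(\lambda^k-\lambda^{k+1})^\top(\lambda^{k+1}-\lambda^*)$.

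For the primal piece I would first write the optimality condition of each $\bfx_i$-subproblem in Algorithm~\ref{Parallel_N}: there is $s_i^{k+1}\in\partial f_i(\bfx_i^{k+1})$ with
\[
s_i^{k+1}=A_i^\top\lambda^k-\rho A_i^\top(A\bfx^k-c)-(P_i+\rho A_i^\top A_i)(\bfx_i^{k+1}-\bfx_i^k),
\]
where I have used $A_i\bfx_i^{k+1}+\sum_{j\ne i}A_j\bfx_j^k=A\bfx^k+A_i(\bfx_i^{k+1}-\bfx_i^k)$. Because \eqref{xsopt2} gives $A_i^\top\lambda^*\in\partial f_i(\bfx_i^*)$, the monotonicity of the subdifferential (Lemma~\ref{lem:convex}) yields $(s_i^{k+1}-A_i^\top\lambda^*)^\top(\bfx_i^{k+1}-\bfx_i^*)\ge 0$ for each $i$. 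Summing over $i$ and recalling that $G_x$ is block diagonal with blocks $P_i+\rho A_i^\top A_i$ collapses this into a single lower bound for $(\bfx^k-\bfx^{k+1})^\top G_x(\bfx^{k+1}-\bfx^*)$ expressed through $(A(\bfx^{k+1}-\bfx^*))^\top(\lambda^*-\lambda^k)$ and $\rho(A(\bfx^{k+1}-\bfx^*))^\top(A\bfx^k-c)$.

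The dual piece is handled exactly by the $\lambda$-update. From $\lambda^{k+1}=\lambda^k-\gamma\rho(A\bfx^{k+1}-c)$ together with $A\bfx^*=c$ from \eqref{lsopt2}, I obtain the key identity $A(\bfx^{k+1}-\bfx^*)=A\bfx^{k+1}-c=\frac{1}{\gamma\rho}(\lambda^k-\lambda^{k+1})$, so $\frac{1}{\gamma\rho}(\lambda^k-\lambda^{k+1})^\top(\lambda^{k+1}-\lambda^*)=(A(\bfx^{k+1}-\bfx^*))^\top(\lambda^{k+1}-\lambda^*)$. Adding the primal and dual bounds, the $\lambda^*$ terms cancel and I am left with $(\bfu^k-\bfu^{k+1})^\top G(\bfu^{k+1}-\bfu^*)\ge (A(\bfx^{k+1}-\bfx^*))^\top[(\lambda^{k+1}-\lambda^k)+\rho(A\bfx^k-c)]$. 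I then substitute the ``stale'' residual $A\bfx^k-c=\frac{1}{\gamma\rho}(\lambda^k-\lambda^{k+1})+A(\bfx^k-\bfx^{k+1})$ and once more $A(\bfx^{k+1}-\bfx^*)=\frac{1}{\gamma\rho}(\lambda^k-\lambda^{k+1})$; after collecting the coefficient of $\|\lambda^k-\lambda^{k+1}\|^2$ (which becomes $\frac{1-\gamma}{\gamma^2\rho}$) and of $(\lambda^k-\lambda^{k+1})^\top A(\bfx^k-\bfx^{k+1})$ (which becomes $\frac{1}{\gamma}$), doubling and adding $\|\bfu^k-\bfu^{k+1}\|_G^2$ reproduces $h(\bfu^k,\bfu^{k+1})$, the $\lambda$-weight combining as $\frac{1}{\gamma\rho}+\frac{2(1-\gamma)}{\gamma^2\rho}=\frac{2-\gamma}{\gamma^2\rho}$.

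The step I expect to demand the most care is the treatment of the stale residual $A\bfx^k-c$. Unlike the Gauss--Seidel scheme, the Jacobi update evaluates every off-diagonal block at the old iterate $\bfx_j^k$, which is precisely why the residual at the \emph{old} point (rather than a partially updated one) enters the optimality condition; routing this term through the dual-update identity is what generates both the coupling $\frac{2}{\gamma}(\lambda^k-\lambda^{k+1})^\top A(\bfx^k-\bfx^{k+1})$ and the $\gamma$-dependent weight on $\|\lambda^k-\lambda^{k+1}\|^2$. I would therefore keep $\gamma$ symbolic throughout and check the coefficient bookkeeping carefully, since a misplaced factor of $\gamma$ changes the final constant. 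I would also emphasize that $h$ is \emph{not} manifestly nonnegative here, nor need it be: establishing $h\ge 0$, and hence the monotone decrease of $\|\bfu^k-\bfu^*\|_G^2$, is a separate question requiring the conditions on $P_i$ and $\gamma$, and I would defer it to the subsequent lemma.
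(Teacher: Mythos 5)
Your proposal is correct and follows essentially the same route as the paper's proof: subproblem optimality conditions plus monotonicity of the subdifferential against the KKT conditions, summation over $i$ into the block-diagonal $G_x$, the dual-update identity $A(\bfx^{k+1}-\bfx^*)=\frac{1}{\gamma\rho}(\lambda^k-\lambda^{k+1})$, and the expansion of $\|\bfu^k-\bfu^*\|_G^2-\|\bfu^{k+1}-\bfu^*\|_G^2$; your coefficient bookkeeping ($\frac{1-\gamma}{\gamma^2\rho}$, $\frac{1}{\gamma}$, and the combination $\frac{1}{\gamma\rho}+\frac{2(1-\gamma)}{\gamma^2\rho}=\frac{2-\gamma}{\gamma^2\rho}$) reproduces the paper's key inequality exactly. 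The only cosmetic difference is that the paper routes the algebra through the auxiliary variable $\hat{\lambda}:=\lambda^k-\rho(A\bfx^{k+1}-c)$ while you keep the stale residual $A\bfx^k-c$ throughout; these are equivalent reorganizations of the same optimality condition.
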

\begin{proof}
Recall that in Algorithm \ref{Parallel_N}, we solve the following $\bfx_i$-subproblem:
$$
\bfx_i^{k+1}=\argmin_{\bfx_i} f_i(\bfx_i)+\frac{\rho}{2}\left\|A_i\bfx_i+\sum_{j\neq i}A_j\bfx_j^k-c-\frac{
\lambda^k}{\rho}\right\|^2+\frac{1}{2}\|\bfx_i-\bfx_i^k\|_{P_i}^2.
$$
Its optimality condition is given by
\beq
\label{xiopt}
A_i^\top\left(\lambda^k-\rho(A_i\bfx^{k+1}_i+\sum_{j\neq i}A_j\bfx_j^k-c)\right)+P_i(\bfx_i^{k}-\bfx_i^{k+1})\in
 \partial f_i(\bfx_i^{k+1}).
\eeq
For convenience, we introduce $\hat{\lambda}:=\lambda^k-\rho(A\bfx^{k+1}-c)$. Then \eqref{xiopt} can be rewritten as
\beq
\label{xiopt2}
A_i^\top\left(\hat{\lambda}-\rho\sum_{j\neq i}A_j(\bfx_j^k-\bfx_j^{k+1})\right)+P_i(\bfx_i^{k}-\bfx_i^{k+1})\in
\partial f_i(\bfx_i^{k+1}).
\eeq
By Lemma~\ref{lem:convex}, it follows from \eqref{xsopt2} and \eqref{xiopt2} that
$$
\left\langle A_i(\bfx_i^{k+1}-\bfx_i^*),~\hat{\lambda}-\lambda^*-\rho\sum_{j\neq i}A_j(\bfx_j^k-\bfx_j^{k+1})\right\rangle +(\bfx_i^{k+1}-\bfx_i^*)^\top P_i(\bfx_i^{k}-\bfx_i^{k+1})\geq 0.
$$
Summing the above inequality over all $i$ and using the following equality for each $i$:
$$\sum_{j\neq i}A_j(\bfx_j^k-\bfx_j^{k+1})=A(\bfx^k-\bfx^{k+1})-A_i(\bfx_i^k-\bfx_i^{k+1}),$$
we obtain
\beq
\label{xiopt3}
\begin{split}
&\langle A(\bfx^{k+1}-\bfx^*),\hat{\lambda}-\lambda^*\rangle
+ \sum_{i=1}^{N}(\bfx_i^{k+1}-\bfx_i^*)^\top (P_i+\rho A_i^\top A_i)(\bfx_i^{k}-\bfx_i^{k+1})\\
&\geq \rho\langle A(\bfx^{k+1}-\bfx^*),A(\bfx^k-\bfx^{k+1})\rangle.
\end{split}
\eeq
Note that
$$A(\bfx^{k+1}-\bfx^*)=\frac{1}{\gamma\rho}(\lambda^k-\lambda^{k+1}),$$ and
$$\hat{\lambda}-\lambda^*=(\hat{\lambda}-\lambda^{k+1})+(\lambda^{k+1}-\lambda^*)=\frac{\gamma-1}{\gamma}
(\lambda^k-\lambda^{k+1})+(\lambda^{k+1}-\lambda^*).$$
With the above two equations,  the inequality \eqref{xiopt3} can be rewritten as
\beq
\begin{split}
&\langle \frac{1}{\gamma\rho}(\lambda^k-\lambda^{k+1}),\lambda^{k+1}-\lambda^*\rangle+\sum_{i=1}^{N}(\bfx_i^{k+1}-\bfx_i^*)^\top (P_i+\rho A_i^\top A_i)(\bfx_i^{k}-\bfx_i^{k+1})\\
&\geq \frac{1-\gamma}{\gamma^2\rho}\|\lambda^k-\lambda^{k+1}\|^2+\frac{1}{\gamma}(\lambda^k-\lambda^{k+1})^\top A(\bfx^k-\bfx^{k+1}),
\end{split}
\eeq
i.e.,
\beq
\label{xiopt4}
(\bfu^k-\bfu^{k+1})^\top G(\bfu^{k+1}-\bfu^*)
\geq \frac{1-\gamma}{\gamma^2\rho}\|\lambda^k-\lambda^{k+1}\|^2+\frac{1}{\gamma}(\lambda^k-\lambda^{k+1})^\top A(\bfx^k-\bfx^{k+1}).
\eeq
Since $\|\bfu^k-\bfu^*\|_G^2-\|\bfu^{k+1}-\bfu^*\|_G^2=2(\bfu^k-\bfu^{k+1})^\top G(\bfu^{k+1}-\bfu^*)+\|\bfu^k-\bfu^{k+1}\|_G^2$, using the above inequality \eqref{xiopt4} yields \eqref{uopt} immediately.
\end{proof}

\begin{lemma} \label{lem:contraction2}
Suppose the parameters $\rho$, $\gamma$ and $P_i~(i=1,2,\ldots,N)$ satisfy the following condition:
\begin{equation} \label{cond2}
\left\{
\begin{array}{l}
P_i\succ \rho(\frac{1}{\epsilon_i}-1)A_i^\top A_i,~i=1,2,\ldots,N\\
\sum_{i=1}^N \epsilon_i<2-\gamma,
\end{array}
\right.
\end{equation}
for some $\epsilon_i>0,~i=1,2,\ldots,N$. Then there exists some $\eta>0$ such that
\beq \label{h_bound}
h(\bfu^k,\bfu^{k+1})\geq~\eta\cdot\|\bfu^k-\bfu^{k+1}\|_G^2.
\eeq
Therefore,
\beq \label{contr2}
\|\bfu^k-\bfu^*\|_G^2-\|\bfu^{k+1}-\bfu^*\|_G^2\geq~\eta\cdot\|\bfu^k-\bfu^{k+1}\|_G^2.
\eeq
Condition \eqref{cond2} can be reduced to
\beq \label{cond2e}
P_i\succ \rho\left(\frac{N}{2-\gamma}-1\right)A_i^\top A_i,~i=1,2,\ldots,N,
\eeq
by letting each $\epsilon_i<\frac{2-\gamma}{N}$. In particular, for the following choices:\begin{itemize}
\item  $P_i=\tau_i\vI$ (standard proximal), condition \eqref{cond2e} becomes
$\tau_i>\rho\left(\frac{N}{2-\gamma}-1\right)\|A_i\|^2$; 
\item $P_i=\tau_i\vI-\rho A_i^\top A_i$ (prox-linear),
condition \eqref{cond2e} becomes $\tau_i>\frac{\rho
N}{2-\gamma}\|A_i\|^2$.
\end{itemize}
\end{lemma}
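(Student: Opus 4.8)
The plan is to prove the key inequality \eqref{h_bound} by hand; once it is available, \eqref{contr2} drops out immediately from Lemma \ref{lem:uopt}, and the reduction to \eqref{cond2e} together with the two bullet specializations are routine. Write $\Delta\bfx_i := \bfx_i^k-\bfx_i^{k+1}$ and $\Delta\lambda := \lambda^k-\lambda^{k+1}$. Expanding the two $G$-type forms blockwise gives
\[
h(\bfu^k,\bfu^{k+1}) = \sum_{i=1}^N \|\Delta\bfx_i\|_{P_i+\rho A_i^\top A_i}^2 + \frac{2-\gamma}{\rho\gamma^2}\|\Delta\lambda\|^2 + \frac{2}{\gamma}\,\Delta\lambda^\top\!\sum_{i=1}^N A_i\Delta\bfx_i,
\]
while $\|\bfu^k-\bfu^{k+1}\|_G^2 = \sum_i \|\Delta\bfx_i\|_{P_i+\rho A_i^\top A_i}^2 + \frac{1}{\gamma\rho}\|\Delta\lambda\|^2$. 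The only real obstacle is the indefinite cross term $\frac{2}{\gamma}\Delta\lambda^\top\sum_i A_i\Delta\bfx_i$; the whole proof is an exercise in absorbing it into the two sign-definite groups of terms.

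The central step is a blockwise Young inequality whose weight is calibrated to $\epsilon_i$. For each $i$ and any $\alpha>0$ one has $2\,\Delta\lambda^\top A_i\Delta\bfx_i \ge -\alpha\|\Delta\lambda\|^2-\alpha^{-1}\|A_i\Delta\bfx_i\|^2$; taking $\alpha=\epsilon_i/(\rho\gamma)$ yields
\[
\frac{2}{\gamma}\,\Delta\lambda^\top A_i\Delta\bfx_i \ge -\frac{\epsilon_i}{\rho\gamma^2}\|\Delta\lambda\|^2 - \frac{\rho}{\epsilon_i}\|A_i\Delta\bfx_i\|^2.
\]
Summing over $i$ and substituting into $h$, the $\|A_i\Delta\bfx_i\|^2$ terms merge with the $\|\Delta\bfx_i\|_{P_i+\rho A_i^\top A_i}^2$ terms to produce
\[
h(\bfu^k,\bfu^{k+1}) \ge \sum_{i=1}^N \Delta\bfx_i^\top\!\big(P_i-\rho(1/\epsilon_i-1)A_i^\top A_i\big)\Delta\bfx_i + \frac{(2-\gamma)-\sum_i\epsilon_i}{\rho\gamma^2}\|\Delta\lambda\|^2.
\]
The first line of \eqref{cond2} makes each $Q_i:=P_i-\rho(1/\epsilon_i-1)A_i^\top A_i\succ 0$, and the second line makes the scalar coefficient of $\|\Delta\lambda\|^2$ positive, so the right-hand side equals $\|\bfu^k-\bfu^{k+1}\|_{\bar G}^2$ with the fixed positive-definite $\bar G:=\mathrm{diag}(Q_1,\dots,Q_N,\frac{(2-\gamma)-\sum_i\epsilon_i}{\rho\gamma^2}\vI)$. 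I would also record that \eqref{cond2} forces $P_i+\rho A_i^\top A_i=Q_i+\frac{\rho}{\epsilon_i}A_i^\top A_i\succ 0$, hence $G\succ 0$; then $\eta:=\min_{v\ne 0}\|v\|_{\bar G}^2/\|v\|_G^2>0$ (the smallest generalized eigenvalue of the two fixed positive-definite forms) is well defined and gives \eqref{h_bound}. The step needing the most care is pinning the Young weight to $\alpha=\epsilon_i/(\rho\gamma)$, since only this choice makes the $\|A_i\Delta\bfx_i\|^2$ coefficient exactly $\rho/\epsilon_i$ and the $\|\Delta\lambda\|^2$ coefficients sum to $\sum_i\epsilon_i/(\rho\gamma^2)$, so that both lines of \eqref{cond2} line up with the hypotheses.

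Inequality \eqref{contr2} then follows at once by chaining \eqref{h_bound} with \eqref{uopt}. For the reduction to \eqref{cond2e} I would use the symmetric choice $\epsilon_i=(2-\gamma)/N$, which converts the first line of \eqref{cond2} into exactly \eqref{cond2e}; because \eqref{cond2e} is strict, decreasing the $\epsilon_i$ slightly below $(2-\gamma)/N$ keeps each $Q_i\succ 0$ by continuity while making $\sum_i\epsilon_i<2-\gamma$ strict, so \eqref{cond2} is genuinely satisfiable. Finally, the two specializations follow from $A_i^\top A_i\preceq\|A_i\|^2\vI$ and $\|A_i^\top A_i\|=\|A_i\|^2$: for $P_i=\tau_i\vI$, condition \eqref{cond2e} holds as soon as $\tau_i>\rho(\tfrac{N}{2-\gamma}-1)\|A_i\|^2$, and for $P_i=\tau_i\vI-\rho A_i^\top A_i$ the left side of \eqref{cond2e} becomes $\tau_i\vI\succ\frac{\rho N}{2-\gamma}A_i^\top A_i$, which holds as soon as $\tau_i>\frac{\rho N}{2-\gamma}\|A_i\|^2$.
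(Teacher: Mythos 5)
Your proof is correct and follows essentially the same route as the paper: the identical weighted Young/Cauchy--Schwarz bound $\frac{2}{\gamma}\Delta\lambda^\top A_i\Delta\bfx_i \ge -\frac{\epsilon_i}{\rho\gamma^2}\|\Delta\lambda\|^2-\frac{\rho}{\epsilon_i}\|A_i\Delta\bfx_i\|^2$, the same merged lower bound on $h$, and positivity of the resulting block-diagonal form under \eqref{cond2}. The only differences are that you make explicit two points the paper leaves implicit --- the construction of $\eta$ as a generalized eigenvalue of $\bar G$ against $G$, and the continuity argument justifying the reduction to \eqref{cond2e} with $\epsilon_i$ slightly below $(2-\gamma)/N$ --- both of which are fine.
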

\begin{proof}
By the Cauchy-Schwarz inequality,
\begin{align}
\nonumber
\frac{2}{\gamma}(\lambda^k-\lambda^{k+1})^\top A(\bfx^k-\bfx^{k+1})
=&\sum_{i=1}^{N}\frac{2}{\gamma}(\lambda^k-\lambda^{k+1})^\top A_i(\bfx_i^k-\bfx_i^{k+1})\\
\label{cauchy}
\geq&-\sum_{i=1}^{N}\left(\frac{\epsilon_i}{\rho\gamma^2}\|\lambda^k-\lambda^{k+1}
\|^2+\frac{\rho}{\epsilon_i}
\|A_i(\bfx_i^k-\bfx_i^{k+1})\|^2\right)
\end{align}
Then we have
\beq \label{h_bound2}
h(\bfu^k,\bfu^{k+1})
\geq \sum_{i=1}^{N}\|\bfx_i^k-\bfx_i^{k+1}\|_{P_i+\rho A_i^\top A_i-\frac{\rho}{\epsilon_i}A_i^\top A_i}^2+\frac
{2-\gamma-\sum_{i=1}^N \epsilon_i}{\rho\gamma^2}\|\lambda^k-\lambda^{k+1}\|^2.
\eeq
The condition \eqref{cond2} guarantees that $P_i+\rho A_i^\top A_i-\frac{\rho}{\epsilon_i}A_i^\top A_i\succ 0$
and $2-\gamma-\sum_{i=1}^N \epsilon_i>0$. Therefore, we must have \eqref{h_bound} for some $\eta>0$.
By Lemma \ref{lem:uopt}, \eqref{contr2} follows immediately.
\end{proof}

Lemma \ref{lem:contraction2} shows that the iterative sequence $\{\bfu^k\}$ is \textit{strictly contractive}. It follows that the error $\|\bfu^k-\bfu^*\|_G^2$ is monotonically non-increasing and thus converging, as well as $\|\bfu^k-\bfu^{k+1}\|_G^2\to 0$.
The convergence of the algorithm follows immediately from the standard analysis for contraction methods (see, e.g., \cite{he1997class}). We omit the details of the proof for the sake of brevity.
\begin{theorem}
Suppose the parameters in Algorithm \ref{Parallel_N} satisfy the condition \eqref{cond2}. Then the sequence $\{\bfu^k\}$ generated by Algorithm \ref{Parallel_N} converges to a solution $\bfu^*$ to the problem \eqref{multivariateOptimization}.
\end{theorem}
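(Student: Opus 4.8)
The plan is to run the standard Opial-style argument for strictly contractive sequences, using the key estimate \eqref{contr2} from Lemma \ref{lem:contraction2}. First I would record that condition \eqref{cond2} forces $P_i+\rho A_i^\top A_i\succ 0$ for every $i$, since $P_i+\rho A_i^\top A_i\succ\frac{\rho}{\epsilon_i}A_i^\top A_i\succeq 0$; hence $G_x\succ 0$ and, together with the block $\frac{1}{\gamma\rho}\vI\succ 0$, we get $G\succ 0$. This makes $\|\cdot\|_G$ a genuine norm, equivalent to the Euclidean norm on $\RR^{n+m}$. From \eqref{contr2} the scalar sequence $\|\bfu^k-\bfu^*\|_G^2$ is non-increasing and bounded below by zero, hence convergent; in particular $\{\bfu^k\}$ is bounded. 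Summing \eqref{contr2} over $k=0,1,\ldots,K$ telescopes to $\eta\sum_{k=0}^K\|\bfu^k-\bfu^{k+1}\|_G^2\le\|\bfu^0-\bfu^*\|_G^2$, so the series converges and therefore $\|\bfu^k-\bfu^{k+1}\|_G\to 0$; by norm equivalence both $\bfx^k-\bfx^{k+1}\to 0$ and $\lambda^k-\lambda^{k+1}\to 0$.

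Next I would identify any limit point as a solution. Since $\{\bfu^k\}$ is bounded, the Bolzano-Weierstrass theorem yields a subsequence $\bfu^{k_j}\to\bfu^\infty=(\bfx^\infty,\lambda^\infty)$, and because consecutive iterates differ by a null sequence, $\bfu^{k_j+1}\to\bfu^\infty$ as well. The dual update gives $A\bfx^{k+1}-c=\frac{1}{\gamma\rho}(\lambda^k-\lambda^{k+1})\to 0$, so passing to the limit yields the primal feasibility $A\bfx^\infty=c$, i.e. \eqref{lsopt2}. For the stationarity condition I would pass to the limit in the subproblem optimality condition \eqref{xiopt2}: its left-hand side equals $A_i^\top\hat{\lambda}-\rho A_i^\top\sum_{j\neq i}A_j(\bfx_j^k-\bfx_j^{k+1})+P_i(\bfx_i^k-\bfx_i^{k+1})$, and since $\hat{\lambda}=\lambda^k-\rho(A\bfx^{k+1}-c)\to\lambda^\infty$ while the two difference terms vanish, the left-hand side converges to $A_i^\top\lambda^\infty$. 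Invoking the closedness of the graph of $\partial f_i$, which is valid because each $f_i$ is closed proper convex (Assumption \ref{ass:convex}), the inclusion is preserved in the limit, giving $A_i^\top\lambda^\infty\in\partial f_i(\bfx_i^\infty)$, which is exactly \eqref{xsopt2}. Hence $\bfu^\infty$ satisfies the KKT conditions and is a solution.

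Finally I would upgrade subsequential convergence to convergence of the whole sequence. The crucial observation is that Lemmas \ref{lem:uopt} and \ref{lem:contraction2} hold verbatim with the arbitrary saddle point $\bfu^*$ replaced by the specific solution $\bfu^\infty$; therefore $\|\bfu^k-\bfu^\infty\|_G^2$ is itself monotonically non-increasing. Along the subsequence it tends to zero, and a monotone sequence possessing a subsequence converging to zero must converge to zero; thus $\|\bfu^k-\bfu^\infty\|_G\to 0$ and, by norm equivalence, $\bfu^k\to\bfu^\infty$, completing the proof.

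The step I expect to require the most care is the identification of the limit point as a solution: one must check that $\bfx_i^{k_j}$ and $\bfx_i^{k_j+1}$ share the common limit $\bfx_i^\infty$, that all the difference terms and the residual $A\bfx^{k+1}-c$ genuinely vanish, and --- most importantly --- invoke the closedness of the subdifferential to transfer the inclusion \eqref{xiopt2} to the limit. The boundedness, telescoping, and final monotonicity arguments are routine once $G\succ 0$ is established.
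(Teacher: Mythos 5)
Your proof is correct and follows exactly the route the paper intends: the paper itself omits the details, stating only that convergence ``follows immediately from the standard analysis for contraction methods'' once the strict contraction \eqref{contr2} of Lemma \ref{lem:contraction2} is in hand, and your Opial/Fej\'er-type argument (positive definiteness of $G$, telescoping, vanishing successive differences, identification of a limit point via closedness of $\partial f_i$, then monotonicity with respect to that limit point) is precisely that standard analysis, carried out correctly.
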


\subsection{Rate of Convergence}
Next, we shall establish the $o(1/k)$ convergence rate of Proximal Jacobian ADMM.
We use the quantity $\|\bfu^k-\bfu^{k+1}\|_{G'}^2$ as a measure of the convergence
rate motivated by \cite{he2012non,he2013full}. Here, we define the matrix $G'$ by
$$
G':=\begin{pmatrix}
G'_x &\quad\\
\quad &\frac{1}{\gamma\rho}\vI
\end{pmatrix}\quad\mbox{and}\quad
G'_x:=G_x-\rho A^\top A.$$

\begin{theorem}\label{thm:paraADM_rate}
If $G'_x\succeq0$ and \eqref{cond2} holds, then
$$\|\bfu^{k} - \bfu^{k+1}\|_{G'}^2=o(1/k),$$
and, thus,
$$\|\bfx^k-\bfx^{k+1}\|_{G'_x}^2=o(1/k)\quad\mbox{and}\quad\|\lambda^k-\lambda^{k+1}\|^2=o(1/k).$$
\end{theorem}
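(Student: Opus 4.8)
The plan is to apply the elementary Lemma~\ref{lem:o} to the scalar sequence $a_k := \|\bfu^k-\bfu^{k+1}\|_{G'}^2$, so that the whole proof reduces to verifying the three hypotheses of that lemma: nonnegativity, summability, and monotonicity. The first two are immediate. Nonnegativity holds because $G'_x\succeq 0$ by assumption and $\tfrac{1}{\gamma\rho}\vI\succ 0$, whence $G'\succeq 0$. For summability I would note that
\[
G-G'=\begin{pmatrix} \rho A^\top A & 0\\ 0 & 0\end{pmatrix}\succeq 0,
\]
so $\|\bfu^k-\bfu^{k+1}\|_{G'}^2\le\|\bfu^k-\bfu^{k+1}\|_{G}^2$, and then telescoping the contraction inequality \eqref{contr2} over $k\ge 1$ gives $\eta\sum_{k}\|\bfu^k-\bfu^{k+1}\|_{G}^2\le\|\bfu^1-\bfu^*\|_G^2<+\infty$. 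Hence $\sum_k a_k<+\infty$.

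The substance of the proof is the monotonicity $a_{k+1}\le a_k$, which I would establish in the non-ergodic style of \cite{he2012non,he2013full}. First I would recast the subproblem optimality condition \eqref{xiopt2} as a single variational inequality for the full iterate. Using $\hat\lambda-\lambda^{k+1}=\tfrac{\gamma-1}{\gamma}(\lambda^k-\lambda^{k+1})$, the relation $A\bfx^{k+1}-c=\tfrac{1}{\gamma\rho}(\lambda^k-\lambda^{k+1})$, the identity $\rho\sum_{j\ne i}A_j(\bfx_j^k-\bfx_j^{k+1})=\rho A(\bfx^k-\bfx^{k+1})-\rho A_i(\bfx_i^k-\bfx_i^{k+1})$, and the subgradient inequality, one obtains that for all $\bfu$,
\[
f(\bfx)-f(\bfx^{k+1})+(\bfu-\bfu^{k+1})^\top F(\bfu^{k+1})\ \ge\ (\bfu-\bfu^{k+1})^\top M(\bfu^k-\bfu^{k+1}),
\]
where
\[
M=\begin{pmatrix} G'_x & \frac{\gamma-1}{\gamma}A^\top \\ 0 & \frac{1}{\gamma\rho}\vI\end{pmatrix}.
\]
Note $M$ is \emph{not} symmetric: its symmetric part is $G'$ plus an indefinite off-diagonal coupling block. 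Writing this inequality at step $k$ and at step $k+1$, substituting $\bfu=\bfu^{k+2}$ in the former and $\bfu=\bfu^{k+1}$ in the latter, and adding the two, the $f$-terms cancel and the $F$-terms vanish because the linear part of $F$ is skew-symmetric. This yields the compact relation
\[
(\bfu^{k+1}-\bfu^{k+2})^\top M\big((\bfu^k-\bfu^{k+1})-(\bfu^{k+1}-\bfu^{k+2})\big)\ \ge\ 0.
\]

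The final step is to turn this into $a_{k+1}\le a_k$. Splitting $M$ into its symmetric and antisymmetric parts, applying the polarization identity $2v^\top M_s(w-v)=\|w\|_{M_s}^2-\|v\|_{M_s}^2-\|w-v\|_{M_s}^2$ with $v=\bfu^{k+1}-\bfu^{k+2}$ and $w=\bfu^k-\bfu^{k+1}$, and using the structural identity $A(\bfx^{k+1}-\bfx^{k+2})=\tfrac{1}{\gamma\rho}\big((\lambda^k-\lambda^{k+1})-(\lambda^{k+1}-\lambda^{k+2})\big)$ to rewrite the $\bfx$--$\lambda$ coupling in terms of $\lambda$-increments, the off-diagonal contributions can be absorbed so that the surviving quadratic form is exactly the block-diagonal $G'$-form, giving $\|\bfu^k-\bfu^{k+1}\|_{G'}^2-\|\bfu^{k+1}-\bfu^{k+2}\|_{G'}^2\ge 0$. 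I expect this absorption of the coupling terms to be the main obstacle, since it is precisely the non-symmetry of $M$ (the presence of $A^\top$ off the diagonal) that prevents a one-line reduction and forces one to exploit the above structural identity. Once monotonicity is in hand, Lemma~\ref{lem:o} gives $a_k=\|\bfu^k-\bfu^{k+1}\|_{G'}^2=o(1/k)$; finally, since $G'$ is block-diagonal we have $a_k=\|\bfx^k-\bfx^{k+1}\|_{G'_x}^2+\tfrac{1}{\gamma\rho}\|\lambda^k-\lambda^{k+1}\|^2$ with both summands nonnegative, so each is separately $o(1/k)$, yielding the two stated consequences.
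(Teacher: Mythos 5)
Your proposal is correct and takes essentially the same route as the paper: reduce to Lemma~\ref{lem:o} by checking nonnegativity (from $G'\succeq 0$), summability (telescoping \eqref{contr2} and using $G'\preceq G$), and monotonicity of $\|\bfu^k-\bfu^{k+1}\|_{G'}^2$. The absorption step you defer---eliminating the $\tfrac{\gamma-1}{\gamma}$ coupling between $A(\bfx^k-\bfx^{k+1})$ and $\lambda^k-\lambda^{k+1}$ via the identity $A(\bfx^{k+1}-\bfx^{k+2})=\tfrac{1}{\gamma\rho}\bigl((\lambda^k-\lambda^{k+1})-(\lambda^{k+1}-\lambda^{k+2})\bigr)$---is precisely the content of the paper's Lemma~\ref{lem:mono2}, which carries out the same computation directly on the summed optimality conditions and lands on the nonnegative residual $\|\bfx^k-\bfx^{k+1}\|^2_{(2-\gamma)\rho A^\top A}$, so it does go through (note that $\gamma<2$ is needed there, which is implied by \eqref{cond2}).
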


We need the following monotonic property of the iterations:
\begin{lemma} \label{lem:mono2}
If $G'_x\succeq0$ and $0<\gamma<2$,
then
\beq \label{monotone_N}
\|\bfu^k-\bfu^{k+1}\|_{G'}^2\leq \|\bfu^{k-1}-\bfu^{k}\|_{G'}^2.
\eeq
\end{lemma}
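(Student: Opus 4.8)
The plan is to establish a single ``fundamental'' variational inequality satisfied by the map $\bfu^k\mapsto\bfu^{k+1}$ and then play it against itself at two consecutive iterations, combining the two copies using the skew-symmetry of $F$. First I would rewrite the subproblem optimality condition \eqref{xiopt2} in a compact form. Substituting the relation $\hat{\lambda}-\lambda^{k+1}=\frac{\gamma-1}{\gamma}(\lambda^k-\lambda^{k+1})$ already used in the proof of Lemma~\ref{lem:uopt}, together with $\sum_{j\ne i}A_j(\bfx_j^k-\bfx_j^{k+1})=A(\bfx^k-\bfx^{k+1})-A_i(\bfx_i^k-\bfx_i^{k+1})$, the scattered Jacobian cross terms $-\rho A_i^\top\sum_{j\ne i}A_j(\cdots)$ recombine so that the $\bfx_i$-block becomes $(P_i+\rho A_i^\top A_i)(\bfx_i^k-\bfx_i^{k+1})-\rho A_i^\top A(\bfx^k-\bfx^{k+1})$, which is exactly the $i$-th block of $G'_x(\bfx^k-\bfx^{k+1})$ since $G'_x=G_x-\rho A^\top A$. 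Invoking the subgradient inequality for each $f_i$ and appending the exact dual relation $A\bfx^{k+1}-c=\frac{1}{\gamma\rho}(\lambda^k-\lambda^{k+1})$, I obtain, for all $\bfu$,
$$f(\bfx)-f(\bfx^{k+1})+(\bfu-\bfu^{k+1})^\top F(\bfu^{k+1})\geq(\bfu-\bfu^{k+1})^\top M(\bfu^k-\bfu^{k+1}),$$
where $M=G'+S$ and $S$ is the block matrix whose only nonzero block is $\frac{\gamma-1}{\gamma}A^\top$ coupling $\lambda$ into the $\bfx$-rows.

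Next I would write this inequality both at iteration $k$ and at iteration $k-1$, set $\bfu=\bfu^k$ in the former and $\bfu=\bfu^{k+1}$ in the latter, and add. The $f$-terms cancel, and the two $F$-terms collapse into $(\bfu^k-\bfu^{k+1})^\top[F(\bfu^{k+1})-F(\bfu^k)]$, which vanishes because $F$ is affine with a skew-symmetric linear part, so $(\bfw)^\top[F(\bfu)-F(\bfv)]=0$ whenever $\bfw=\bfu-\bfv$. Writing $a:=\bfu^{k-1}-\bfu^k$ and $b:=\bfu^k-\bfu^{k+1}$, what survives is the clean inequality $b^\top M a\geq b^\top M b$.

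Finally I would convert this into the claimed $G'$-norm monotonicity via the identity $\|a\|_{G'}^2-\|b\|_{G'}^2=\|a-b\|_{G'}^2+2b^\top G'(a-b)$. Splitting $M=G'+S$ in $b^\top Ma\geq b^\top Mb$ gives $b^\top G'(a-b)\geq b^\top S(b-a)$; then using the dual updates $\lambda^k-\lambda^{k+1}=\gamma\rho(A\bfx^{k+1}-c)$ and $\lambda^{k-1}-\lambda^k=\gamma\rho(A\bfx^k-c)$ one computes $b^\top S(b-a)=(1-\gamma)\rho\|A(\bfx^k-\bfx^{k+1})\|^2$, while the $\lambda$-block of $\|a-b\|_{G'}^2$ equals $\gamma\rho\|A(\bfx^k-\bfx^{k+1})\|^2$. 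Assembling these pieces, the cross contributions combine and I arrive at
$$\|a\|_{G'}^2-\|b\|_{G'}^2\geq\|\bfx^{k-1}-2\bfx^k+\bfx^{k+1}\|_{G'_x}^2+(2-\gamma)\rho\|A(\bfx^k-\bfx^{k+1})\|^2,$$
whose right-hand side is nonnegative precisely because $G'_x\succeq0$ and $0<\gamma<2$; this is \eqref{monotone_N}.

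The main obstacle is not the final estimate but the first step: carefully bookkeeping the Jacobian cross terms so that they collapse exactly into $G'_x=G_x-\rho A^\top A$ (this is the whole reason $G'$ is defined as it is), and then keeping the non-symmetric perturbation $S$ under control all the way to the end. The delicate point is the regime $1<\gamma<2$, where $(1-\gamma)\rho\|A(\bfx^k-\bfx^{k+1})\|^2$ is \emph{negative} and must be absorbed by the nonnegative $\lambda$-block $\gamma\rho\|A(\bfx^k-\bfx^{k+1})\|^2$ coming from $\|a-b\|_{G'}^2$; the two precisely leave the favorable coefficient $2-\gamma>0$, so no separate restriction on $\gamma$ beyond $0<\gamma<2$ is needed.
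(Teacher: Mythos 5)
Your proposal is correct and is essentially the paper's own argument in the He--Yuan variational-inequality packaging: the paper applies Lemma~\ref{lem:convex} directly to the optimality conditions \eqref{xiopt2} at iterations $k$ and $k+1$ (your ``two copies of the VI'' step), recombines the Jacobian cross terms into $G'_x=G_x-\rho A^\top A$, and uses the dual update to reach the same final inequality with remainder $(2-\gamma)\rho\|A(\bfx^k-\bfx^{k+1})\|^2\ge 0$. The only difference is cosmetic: you retain the nonnegative second-difference term $\|\bfx^{k-1}-2\bfx^k+\bfx^{k+1}\|_{G'_x}^2$ that the paper discards in its Cauchy--Schwarz step.
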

\begin{proof}
Let $\Dx^{k+1}_i=\bfx_i^k-\bfx_i^{k+1}, i=1, \ldots, N$, $\Dx^{k+1}=\bfx^k -\bfx^{k+1}$,
and $\Dlambda^{k+1}=\lambda^k-\lambda^{k+1}$.
By Lemma~\ref{lem:convex}, the optimality conditions \eqref{xiopt2} at $k$-th and $(k+1)$-th iterations yield
\beq
\langle A_i\Dx_i^{k+1},\Dlambda^k-\rho A\Dx^{k+1}-\rho
\sum_{j\neq i}A_j(\Dx_j^k-\Dx_j^{k+1})\rangle+
(\Dx_i^{k+1})^\top P_i(\Dx_i^{k}-\Dx_i^{k+1})\geq 0.
\eeq
Summing up over all $i$ and rearranging the terms, we have
\beq
\langle A\Dx^{k+1},\Dlambda^k\rangle\geq \|\Dx^{k+1}\|_{G_x}^2-(\Dx^k)^\top(G_x-
\rho A^\top A)\Dx^{k+1}.
\eeq
Since $G'_x:=G_x-\rho A^\top A\succeq 0$, we have
\beq
2(\Dx^k)^\top(G_x-\rho A^\top A)\Dx^{k+1}\leq \|\Dx^k\|_{G'_x}^2+\|\Dx^k\|_{G'_x}^2,
\eeq
and thus
\begin{align}
\nonumber
2\langle A\Dx^{k+1},\Dlambda^k\rangle &\geq \|\Dx^{k+1}\|_{2G_x-G'_x}^2-\|\Dx^k\|_{G'_x}^2\\
&=\|\Dx^{k+1}\|_{G_x+\rho A^\top A}^2-\|\Dx^k\|_{G'_x}^2.
\end{align}
Note that $\Dlambda^{k+1}=\Dlambda^k-\gamma\rho A\Dx^{k+1}$. It follows that
\begin{align}
\nonumber
\frac{1}{\gamma\rho}\|\Dlambda^k\|^2-\frac{1}{\gamma\rho}\|\Dlambda^{k+1}\|^2
=&~2\langle A\Dx^{k+1},\Dlambda^k\rangle-\gamma\rho\|A\Dx^{k+1}\|^2\\
\geq&~ \|\Dx^{k+1}\|_{G_x+(1-\gamma)\rho A^\top A}^2-\|\Dx^k\|^2_{G'_x},
\end{align}
i.e.,
\beq
\begin{split}
&(\|\Dx^k\|^2_{G'_x}+\frac{1}{\gamma\rho}\|\Dlambda^k\|^2)-(\|\Dx^{k+1}\|_{G'_x}+
\frac{1}{\gamma\rho}\|\Dlambda^{k+1}\|^2)\\
&\geq \|\Dx^{k+1}\|_{(2-\gamma)\rho A^\top A}^2\geq0,
\end{split}
\eeq
which completes the proof.
\end{proof}

\begin{proof}[Proof of Theorem \ref{thm:paraADM_rate}]
By Lemma \ref{lem:contraction2}, we have
\beq \label{contr_G'}
\|\bfu^{k}-\bfu^*\|^2_G - \|\bfu^{k+1}-\bfu^*\|^2_G
\geq \eta\|\bfu^k-\bfu^{k+1}\|^2_G
\geq \eta\|\bfu^k-\bfu^{k+1}\|^2_{G'}.
\eeq
Summing \eqref{contr_G'} over $k$ gives
\beq
\sum_{k=1}^{\infty} \|\bfu^k-\bfu^{k+1}\|^2_{G'} < \infty.
\eeq
On the other hand, Lemma \ref{lem:mono2} implies the monotone non-increasing of $\|\bfu^k-\bfu^{k+1}\|^2_{G'}$. By Lemma \ref{lem:o}, we have $\|\bfu^{k} - \bfu^{k+1}\|_{G'}^2=o(1/k)$, which completes the proof.
\end{proof}

\subsection{Adaptive Parameter Tuning} \label{sec:adaptive}
The parameters satisfying the condition \eqref{cond2} may be rather conservative because the Cauchy-Schwarz inequality \eqref{cauchy} for bounding $h(\bfu^k,\bfu^{k+1})$ is usually very loose.
In practice, we can exactly compute $h(\bfu^k,\bfu^{k+1})$ at very little extra cost, instead of using the bound in \eqref{h_bound2}.
Based on the value of $h(\bfu^k,\bfu^{k+1})$, we thereby propose a practical strategy for adaptively adjusting the matrices $\{P_i\}$:

\begin{algorithm}[H]
Initialize with small $P^0_i\succeq 0~(i=1,2,\ldots, N)$ and a small $\eta>0$\;
\For{$k=1,2,\ldots$}{
\eIf{$h(\bfu^{k-1},\bfu^{k}) > \eta\cdot \|\bfu^{k-1}-\bfu^{k}\|_G^2$}
{$P_i^{k+1}\leftarrow P_i^k,~\forall i$\;}
{Increase $P_i$: $P_i^{k+1}\leftarrow \alpha_i P_i^k+\beta_i Q_i ~(\alpha_i>1, ~\beta_i\ge 0, ~Q_i\succ 0),\forall i$\;
 Restart: $\bfu^{k}\leftarrow \bfu^{k-1}$\;}
}
%\caption{}
%\label{}
\end{algorithm}

The above strategy starts with relatively small proximal terms and gradually increase them.
By Lemma \ref{lem:contraction2}, we know that when the parameters $\{P_i\}$ are large enough for \eqref{cond2} to hold, the condition \eqref{h_bound} will be satisfied (for sufficiently small $\eta$). Therefore, the adjustment of $\{P_i\}$ cannot occur infinite times.
After a finite number of iterations, $\{P_i\}$ will remain constant and the contraction property \eqref{contr2} of the iterations will hold.
Therefore, the convergence of such an adaptive parameter tuning scheme follows immediately from our previous analysis.

\begin{theorem}
Suppose the matrices $P_i~(i=1,2,\ldots,N)$ in Algorithm \ref{Parallel_N} are adaptively adjusted using the above scheme. Then the algorithm converges to a solution to the problem \eqref{multivariateOptimization}.
\end{theorem}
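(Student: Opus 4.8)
The plan is to reduce the adaptive scheme to the constant-parameter algorithm already analyzed, by proving that the restart-and-increase branch is triggered only finitely often; after the last trigger the iteration is an ordinary run of Proximal Jacobian ADMM with fixed matrices satisfying \eqref{cond2}, whose convergence is known. The pivot is Lemma \ref{lem:contraction2}, which says that once \eqref{cond2} holds the exact quantity $h(\bfu^{k-1},\bfu^k)$ dominates $\|\bfu^{k-1}-\bfu^k\|_G^2$ up to a positive factor; hence, with the $P_i$ large enough, the algorithm's test can no longer fail.

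First I would track the growth of the proximal matrices. A restart replaces $P_i$ by $\alpha_i P_i+\beta_i Q_i$ with $\alpha_i>1$, $\beta_i\ge 0$, $Q_i\succ 0$; iterating this map $m$ times produces $\alpha_i^m P_i^0+\frac{\alpha_i^m-1}{\alpha_i-1}\beta_i Q_i$, whose least eigenvalue grows geometrically (provided the increment is nontrivial, e.g. $\beta_i>0$). Thus after finitely many restarts each $P_i$ dominates the fixed target $\rho(\frac{N}{2-\gamma}-1)A_i^\top A_i$ of the reduced condition \eqref{cond2e}, so \eqref{cond2} holds. Lemma \ref{lem:contraction2} then furnishes a constant $\bar\eta>0$ with $h(\bfu^{k-1},\bfu^k)\ge\bar\eta\,\|\bfu^{k-1}-\bfu^k\|_G^2$. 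Inspecting \eqref{h_bound2}, as the $P_i$ grow the $\bfx$-block ratio between the matrix appearing in $h$ and the matrix appearing in $G$ tends to $1$, while the $\lambda$-block ratio stays bounded below by $(2-\gamma-\sum_i\epsilon_i)/\gamma>0$; hence $\bar\eta$ is bounded away from zero and exceeds the small fixed threshold $\eta$. So the test $h(\bfu^{k-1},\bfu^k)>\eta\,\|\bfu^{k-1}-\bfu^k\|_G^2$ eventually always succeeds and no further restart occurs.

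After the final restart, the matrices $P_i$ — and hence $G$ — are frozen at values satisfying \eqref{cond2}, and the test holds for every subsequent $k$. Combining each successful test with Lemma \ref{lem:uopt} gives the strict contraction \eqref{contr2} for the tail of $\{\bfu^k\}$; convergence to a solution $\bfu^*$ then follows exactly as in the fixed-parameter convergence theorem, via the standard contraction analysis of \cite{he1997class}. Because only finitely many iterations precede this tail, the full sequence converges.

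The step I expect to be the main obstacle is cleanly separating the fixed test threshold $\eta$ from the contraction constant $\bar\eta$ supplied by Lemma \ref{lem:contraction2}: one must verify that enlarging the $P_i$ actually drives $\bar\eta$ above $\eta$, not merely keeps \eqref{cond2} feasible, and that the restart bookkeeping — the reset $\bfu^k\leftarrow\bfu^{k-1}$ and the change of the $G$-norm at each increase — does not interfere with the eventual monotone $G$-contraction. Both concerns dissolve once finiteness of the restarts is established, since the entire asymptotic behavior is then governed by a single, frozen matrix $G$.
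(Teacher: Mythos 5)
Your proof is correct and follows essentially the same route as the paper: show the restart branch fires only finitely often because the growing $P_i$ eventually satisfy \eqref{cond2}, then invoke the fixed-parameter contraction analysis on the frozen tail. You actually supply more detail than the paper does (the geometric growth of $\lambda_{\min}(P_i)$, and the explicit argument that the contraction constant $\bar\eta$ from \eqref{h_bound2} tends to $\min\bigl(1,(2-\gamma-\sum_i\epsilon_i)/\gamma\bigr)$ and hence clears the fixed threshold $\eta$ when $\eta$ is small), where the paper simply asserts that \eqref{h_bound} holds ``for sufficiently small $\eta$'' once \eqref{cond2} is met.
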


Empirical evidence shows that the paramters $\{P_i\}$ typically adjust themselves only during the first few iterations and then remain constant afterwards. Alternatively, one may also decrease the parameters after every few iterations or after they have not been updated for a certain number of iterations. But the total times of decrease should be bounded to guarantee convergence. By using this adaptive strategy, the resulting paramters $\{P_i\}$ are usually much smaller than those required by the condition \eqref{cond2}, thereby leading to substantially faster convergence in practice.

\section{Numerical Experiments}
In this section, we present numerical results to compare the performance of the following parallel splitting algorithms:
\begin{itemize}
\item \textbf{Prox-JADMM}: proposed Proximal Jacobian ADMM (Algorithm \ref{Parallel_N});
\item \textbf{VSADMM}: Variable Splitting ADMM (Algorithm \ref{alg:vsadmm});
\item \textbf{Corr-JADMM}: Jacobian ADMM with correction steps \cite{he2013full}. At every iteration, it first generates a ``predictor" $\tilde{\bfu}^{k+1}$ by an iteration of Jacobian ADMM (Algorithm \ref{alg:Jacobian-admm}) and then corrects $\tilde{\bfu}^{k+1}$ to generate the new iterate by:
\begin{equation}
\bfu^{k+1}=\bfu^k-\alpha_k(\bfu^k-\tilde{\bfu}^{k+1}),
\end{equation}
where $\alpha_k>0$ is a step size. In our experiments, we adopt the dynamically updated step size $\alpha_k$ according to \cite{he2013full}, which is shown to converge significantly faster than using a constant step size, though updating the step size requires extra computation.
\item \textbf{YALL1}:  one of the state-of-the-art solvers for the $\ell_1$-minimization problem.
\end{itemize}

In Section \ref{sec:Exchange_Problem} and \ref{sec:l1min}, all of the numerical experiments are run in MATLAB (R2011b) on a workstation with an Intel Core i5-3570 CPUs (3.40GHz) and 32 GB of RAM. Section \ref{sec:large_test} gives two very large instances that are solved by a C/MPI implementation on  Amazon Elastic Compute Cloud (EC2).

\subsection{Exchange Problem}
\label{sec:Exchange_Problem}
Consider a network of $N$ agents that exchange $n$ commodities.
Let $\bfx_i\in\RR^n~(i=1,2,\ldots,N)$ denote the amount of commodities that are exchanged among the $N$ agents.
Each agent $i$ has a certain cost function $f_i:\RR^n\rightarrow \RR$.
The exchange problem (see, e.g., \cite{boyd2010distributed} for a review) is given by
\beq \label{exchange}
\min_{\{\bfx_i\}} ~\sum_{i=1}^N~f_i(\bfx_i)\quad\st ~\sum_{i=1}^N \bfx_i=0,
\eeq
which minimizes the total cost among $N$ agents subject to an equilibrium constraint on the commodities.
This is a special case of \eqref{multivariateOptimization} where $A_i=\vI$ and $c=0$.

We consider quadratic cost functions $f_i(\bfx_i):=\frac{1}{2}\|C_i\bfx_i-d_i\|^2$, where $C_i\in\RR^{p\times n}$ and $d_i\in\RR^p$.
Then all the compared algorithms solve the following type of subproblems at every iteration:
\beq
\bfx_i^{k+1}=\argmin_{\bfx_i}~\frac{1}{2}\|C_i\bfx_i-d_i\|^2+\frac{\rho}{2}\|\bfx_i-b_i^k\|^2,~\forall i=1,2,\ldots,N,
\eeq
except that Prox-JADMM also adds a proximal term $\frac{1}{2}\|\bfx_i-\bfx_i^k\|_{P_i}^2$.
Here $b_i^k\in\RR^{m}$ is a vector independent of $\bfx_i$ and takes different forms in different algorithms.
For Prox-JADMM, we simply set $P_i=\tau_i\vI~(\tau_i>0)$. Clearly, each $\bfx_i$-subproblem is a quadratic program that can be computed efficiently using various methods.

In our experiment, we randomly generate $\bfx^*_i,~i=1,2,\ldots,N-1$, following the standard Gaussian distribution, and let $\bfx^*_N=-\sum_{i=1}^{N-1}\bfx^*_i$.
Matrices $C_i$ are random Gaussian matrices, and  vectors $d_i$ are computed by $d_i=C_i\bfx_i^*$.
Apparently, $\bfx^*$ is  a solution (not necessarily unique) to \eqref{exchange}, and the optimal objective value is 0.

The penalty parameter $\rho$ is set to be 0.01, 1 and 0.01 for Prox-JADMM, VSADMM and Corr-JADMM, respectively.
They are nearly optimal for each algorithm, picked out of a number of different values.
Note that the parameter for VSADMM is quite different from the other two algorithms because it has different constraints due to the
variable splitting. For Prox-JADMM, the proximal parameters are initialized by $\tau_i=0.1(N-1)\rho$ and adaptively updated by the strategy in
Subsection \ref{sec:adaptive}; the parameter $\gamma$ is set to be 1.

The size of the test problem is set to be $n=100,~N=100,~p=80$. Letting all the algorithms run 200 iterations, we plot their objective value
$\sum_{i=1}^N f_i(\bfx_i)$ and residual $\|\sum_{i=1}^N \bfx_i\|_2$. Note that the per-iteration cost (in terms of both computation and communication) is roughly the same for all the compared algorithms.
Figure \ref{fig:exchange} shows the comparison result, which is averaged over 100 random trials. We can see that Prox-JADMM is clearly the fastest one among the compared algorithm.

\begin{figure}[htbp]
\centering
\includegraphics[width=0.95\textwidth]{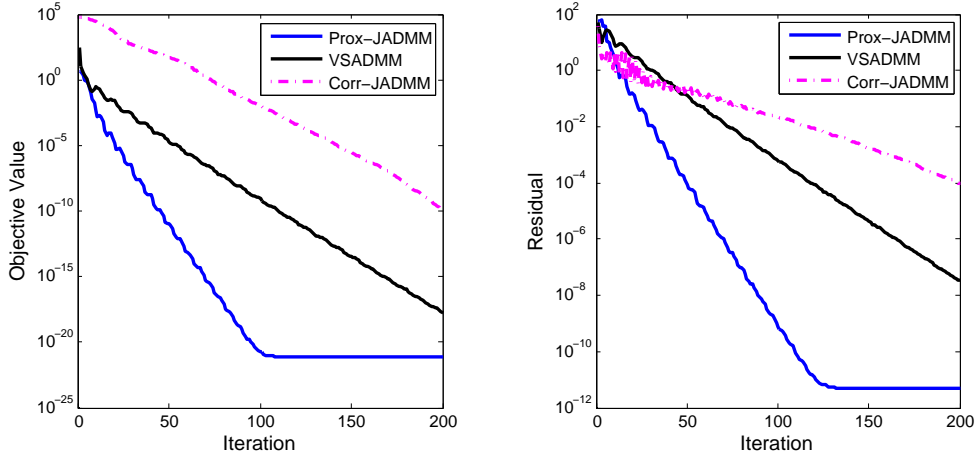}
\caption{Exchange problem $(n=100,~N=100,~p=80)$.}
\label{fig:exchange}
\end{figure}

\subsection{$\ell_1$-minimization}\label{sec:l1min}
We consider the $\ell_1$-minimization problem for finding sparse solutions of an underdetermined linear system:
\beq \label{l1min}
\min_{\bfx}~\|\bfx\|_1\quad\st ~A\bfx = c,
\eeq
where $\bfx\in\RR^n$, $A\in \RR^{m\times n}$ and $c\in\RR^m$ ($m<n$).
It is also known as the \textit{basis pursuit} problem, which has been widely used in compressive sensing, signal and image processing, statistics, and machine learning.
Suppose that the data is partitioned into $N$ blocks: $\bfx=[\bfx_1,\bfx_2,\ldots,\bfx_N]$ and $A=[A_1,A_2,\ldots,A_N]$. Then the problem \eqref{l1min} can be written in the form of (\ref{multivariateOptimization}) with $f_i(\bfx_i)= \|\bfx_i\|_1$.

In our experiment, a sparse solution $\bfx^*$ is randomly generated with $k~(k\ll n)$ nonzeros drawn from the standard Gaussian distribution. Matrix $A$ is also randomly generated from the standard Gaussian distribution, and it is partitioned evenly into $N$ blocks. The vector $c$ is then computed by $c=A\bfx^*+\eta$, where $\eta\sim \cN(0,~\sigma^2\vI)$ is Gaussian noise with standard deviation $\sigma$.

Prox-JADMM solves the $\bfx_i$-subproblems with $P_i=\tau_i \vI-\rho A_i^\top A_i~(i=1,2,\ldots,N)$ as follows:
\begin{align} \label{l1-proxJADMM}
\nonumber
\bfx_i^{k+1} &=\argmin_{\bfx_i}~\|\bfx_i\|_1 +\frac{\rho}{2}\left\|A_i\bfx_i+\sum_{j\ne i}A_j\bfx_j^k-c-\frac{\lambda^k}{\rho}\right\|^2 +\frac{1}{2}\left\|\bfx_i-\bfx_i^k\right\|_{P_i}^2\\
&=\argmin_{\bfx_i}~\|\bfx_i\|_1+\left\langle \rho A_i^\top\left(A\bfx^k-c-
\frac{\lambda^k}{\rho}\right),\bfx_i\right\rangle+\frac{\tau_i}{2}
\left\|\bfx_i-\bfx_i^k\right\|^2.
\end{align}
Here, we choose the prox-linear $P_i$'s  to linearize the original subproblems, and thus \eqref{l1-proxJADMM} admits a simple closed-form solution by the \textit{shrinkage} (or \textit{soft-thresholding}) formula.
The proximal parameters are initialized as $\tau_i=0.1N\rho$ and are adaptively updated by the strategy discussed in Section \ref{sec:adaptive}.

Recall that VSADMM needs to solve the following $\bfx_i$-subproblems:
\begin{align}
\bfx_i^{k+1}=\argmin_{\bfx_i} ~\|\bfx_i\|_1+\frac{\rho}{2}\left\|A_i\bfx_i-\bfz_i^{k+1}-\frac{c}{N}-\frac{\lambda^k_i}{\rho}\right\|^2.
\end{align}
Such subproblems are not easily computable, unless $\bfx_i$ is a scalar (i.e., $n_i=1$) or $A_i^\top A_i$ is a diagonal matrix.
Instead, we solve the subproblems approximately using the {prox-linear} approach:
\begin{align} \label{l1_prox}
\bfx_i^{k+1}=\argmin_{\bfx_i}~\|\bfx_i\|_1+\left\langle \rho A_i^\top\left(A_i\bfx^k_i-\bfz_i^{k+1}-\frac{c}{N}-
\frac{\lambda^k_i}{\rho}\right), \bfx_i-\bfx_i^k\right\rangle+\frac{\tau_i}{2}\left\|\bfx_i-\bfx_i^k\right\|^2,
\end{align}
which can be easily computed by the shrinkage operator. We set $\tau_i=1.01\rho\|A_i\|^2$ in order to guarantee the convergence, as suggested in \cite{wang2013solving}.

Corr-JADMM solves the following $\bfx_i$-subproblems in the ``prediction" step:
\begin{align} \label{l1-CorrJADMM}
\tilde{\bfx}_i^{k+1}=\argmin_{\bfx_i}~\|\bfx_i\|_1+\frac{\rho}{2}\left\|A_i\bfx_i+\sum_{j\ne i}A_j
\bfx_j^k-c-\frac{\lambda^k}{\rho}\right\|^2.
\end{align}
Because the correction step in \cite{he2013full} is based on exact minimization of the subproblems, we do not apply the prox-linear approach to solve the subproblems approximately. Instead, we always partition $\bfx$ into scalar components (i.e., $N=n$) so that the subproblems \eqref{l1-CorrJADMM} can still be computed exactly. The same penalty parameter $\rho=10/\|c\|_1$ is used for the three algorithms. It is nearly optimal for each algorithm, selected out of a number of different values.

In addition, we also include the YALL1 package \cite{yang2011alternating} in the experiment, which is one of the state-of-the-art solvers for
$\ell_1$ minimization.
Though YALL1 is not implemented in parallel, the major computation of its iteration is matrix-vector multiplication by $A$ and $A^\top$, which can be easily parallelized (see \cite{peng2013parallel}).
Since all the compared algorithms have roughly the same amount of per-iteration cost (in terms of both computation and communication), we simply let all the algorithms run for a fixed number of iterations and plot their relative error $\frac{\|\bfx^k-\bfx^*\|_2}{\|\bfx^*\|_2}$.

\begin{figure}[htbp]
\centering
    \subfigure[Noise-free $(\sigma = 0)$]{
        \includegraphics[width=0.47\textwidth]{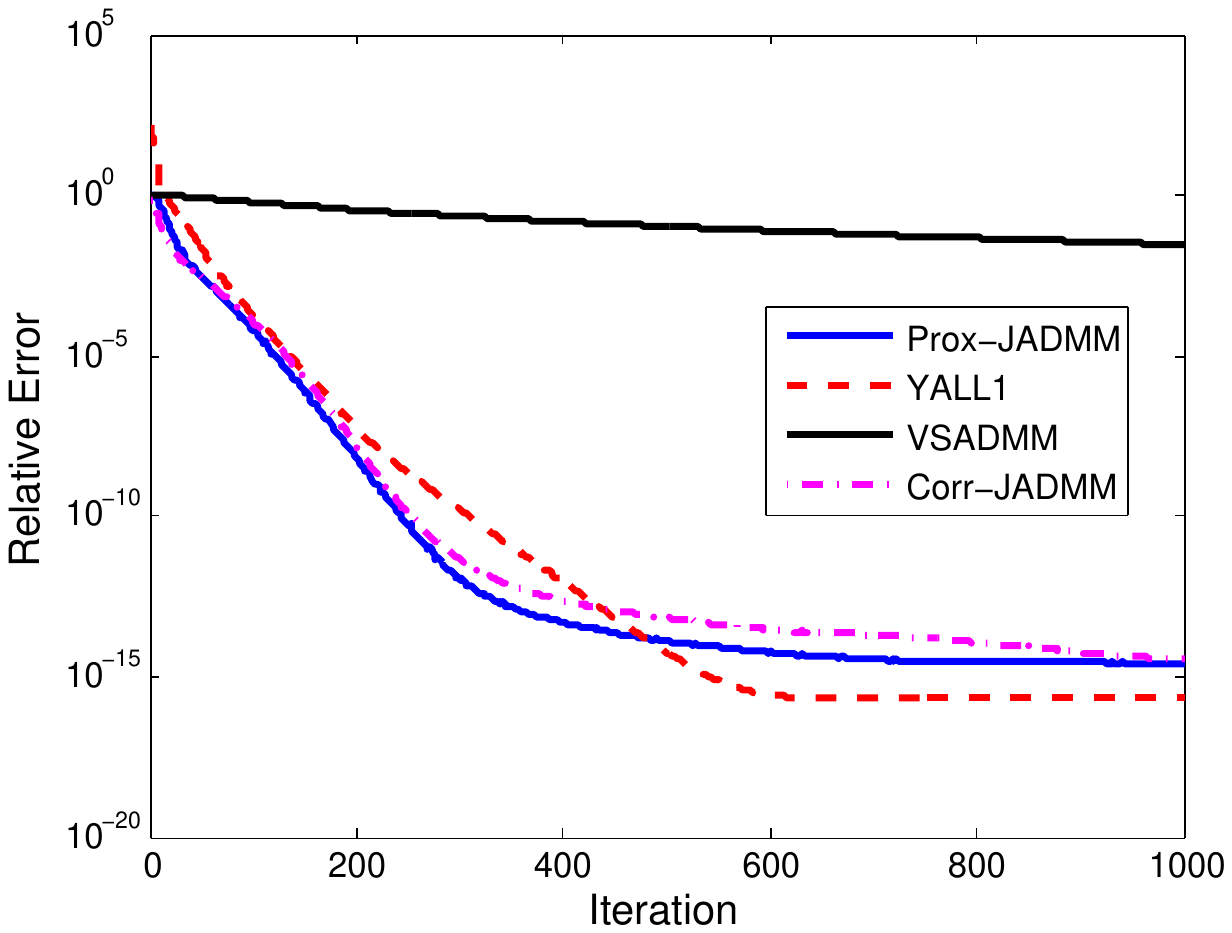}
        \label{subfig:L1a}
    }
    \subfigure[Noise added $(\sigma= 10^{-3})$]{
        \includegraphics[width=0.47\textwidth]{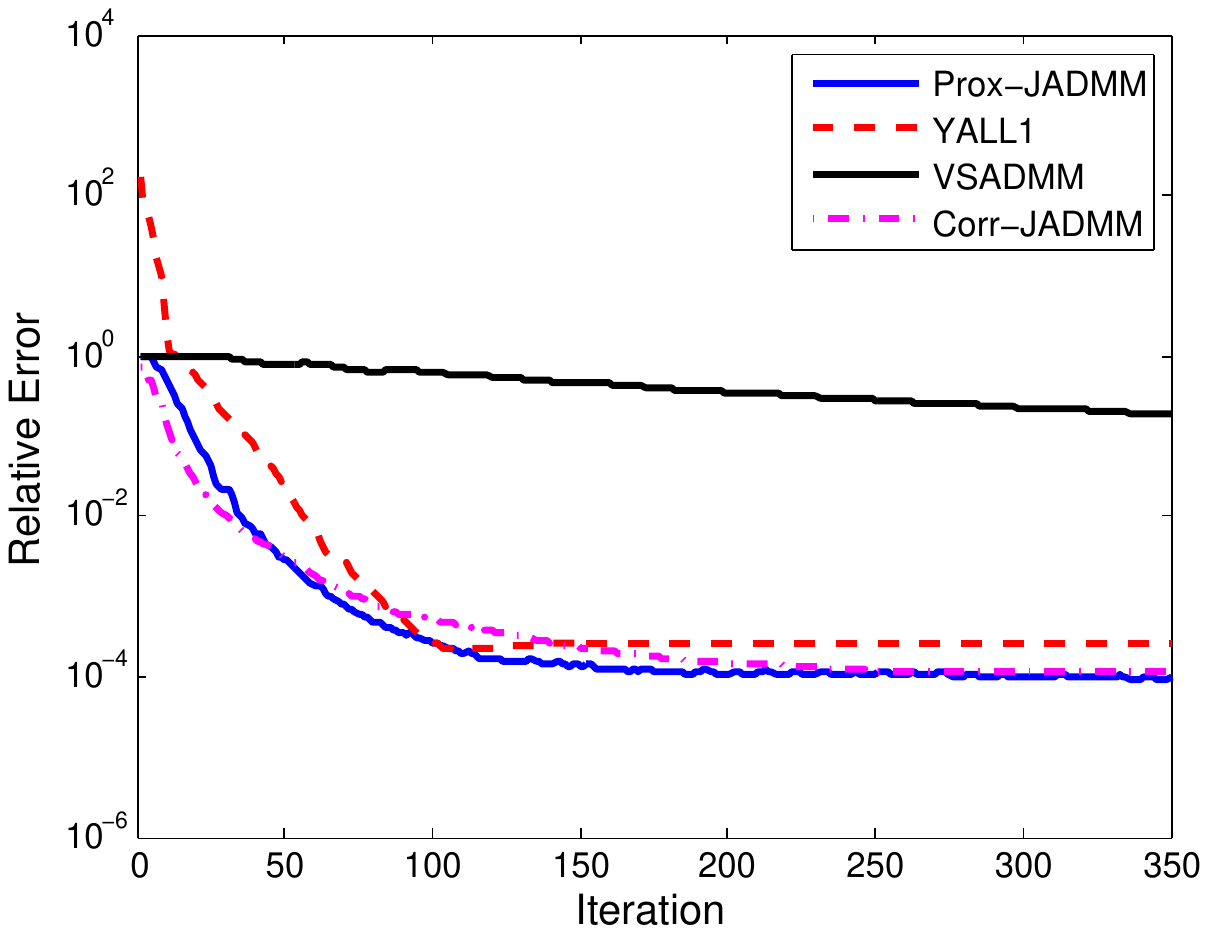}
        \label{subfig:L1b}
    }
     \caption{$\ell_1$-problem $(n=1000,~m=300,~k=60)$.}
     \label{fig:L1}
\end{figure}

Figure \ref{fig:L1} shows the comparison result where $n=1000,~m=300,~k=60$ and the standard deviation of noise $\sigma$ is set to be 0 and $10^{-3}$, respectively.
For Prox-JADMM and VSADMM, we set $N=100$; for Corr-JADMM, we set $N=1000$. The results are average of 100 random trials.
We can see that Prox-JADMM and Corr-JADMM achieve very close performance and are the fastest ones among the compared algorithms. YALL1 also shows competitive performance. However, VSADMM is far slower than the others, probably due to inexact minimization of the subproblems and the conservative proximal parameters.

\subsection{Distributed Large-Scale $\ell_1$-Minimization}
\label{sec:large_test}
In previous subsections, we described the numerical simulation of a distributed implementation of Proximal Jacobian ADMM that was  carried out  in Matlab. We now turn to realistic distributed examples and solve two very large instances of the $\ell_1$-minimization problem \eqref{l1min} using a C code with MPI for inter-process communication and the GNU Scientific Library (GSL) for BLAS operations. The experiments are carried out  on Amazon's Elastic Compute Cloud (EC2).

We generate two test instances as shown in Table \ref{tab:largeData}. Specifically, a sparse solution $\bfx^*$ is randomly generated with $k$ nonzeros drawn from the standard Gaussian distribution. Matrix $A$ is also randomly generated from the standard Gaussian distribution with $m$ rows and $n$ columns, and it is partitioned evenly into $N=80$ blocks. Vector $c$ is then computed by $c=A\bfx^*$. Note that $A$ is dense and has double precision. For Test 1 it requires over $150$ GB of RAM and has 20 billion nonzero entries, and for Test 2 it requires over $337$GB of RAM. Those two tests are far too large to process on a single PC or workstation. We want to point out that we cannot find a dataset of similar or larger size in the public domain. We are willing to test our a larger problem per reader's request.
\begin{table}[!htbp]
\centering
 \caption{\label{tab:largeData}Two large datasets}
 \begin{tabular}{lcccc}
  \toprule
  & $m$ & $n$ &$k$ & RAM\\
  \midrule
dataset 1 & $1.0 \times 10^5$ & $2.0 \times 10^5$& $2.0 \times 10^3$ & $150$GB \\
dataset 2 & $1.5 \times 10^5$ & $3.0 \times 10^5$ & $3.0 \times 10^3$ & $337$GB \\
  \bottomrule
 \end{tabular}
\end{table}

We solve the problem using a cluster of 10 machines, where each machine is a ``memory-optimized instance" with $68$ GB RAM and 1 eight-core Intel Xeon E5-2665 CPU. Those instances run Ubuntu 12.04 and are connected with 10 Gigabit ethernet network. Since each  has 8 cores, we run the code with 80 processes so that each process runs on its own core. Such a  setup is charged for under $\$17$ per hour.

We solve the large-scale $\ell_1$ minimization problems with a C implementation that matches the Matlab implementation in the previous section. The implementation consists of a single file of C code of about 300 lines, which is available for download on our personal website.

\begin{table}[!htbp]
\centering
 \caption{\label{tab:largeDataTime}Time results for large scale $\ell_1$ minimization examples}
 \begin{tabular}{lrrrrrr}
  \toprule
  &\multicolumn{3}{ c }{$150$GB Test } &  \multicolumn{3}{ c }{$337$GB Test }\\
  & \cline{1-6}
  & Itr & Time(s) & Cost(\$) & Itr & Time(s) & Cost(\$)\\
  \midrule
Data generation & --& 44.4  &0.21 & -- &99.5 & 0.5\\
CPU per iteration & --&  1.32 & --   &--& 2.85 & --   \\
Comm. per iteration &--& 0.07  & --  &--& 0.15  & --\\
Reach $10^{-1}$ &23  &30.4  &0.14 &27 &  79.08 & 0.37 \\
Reach $10^{-2}$ &30  &39.4 &0.18  &39 & 113.68& 0.53  \\
Reach $10^{-3}$ &86  &112.7&0.53  &84 & 244.49& 1.15  \\
Reach $10^{-4}$ &234 &307.9&1.45  &89 & 259.24& 1.22  \\
\bottomrule
 \end{tabular}
\end{table}

The breakdown of the wall-clock time is summarized in Table \ref{tab:largeDataTime}. We can observe that Jacobian ADMM is very efficient in obtaining a relative low accuracy, which is usually sufficient for large-scale problems. We want to point out that the basic BLAS operations in our implantation can be further improved by using other libraries such as  hardware-optimized BLAS libraries produced by ATLAS, Armadillo, etc. Those libraries might lead to several times of speedup\footnote{http://nghiaho.com/?p=1726}. We use GSL due to its ease of use, so the code can be easily adapted for solving similar problems.

\section{A Sufficient Condition for Convergence of Jacobian ADMM}\label{sec:suf}
In this section, we provide a sufficient condition to guarantee the
convergence of  Jacobian ADMM (Algorithm \ref{alg:Jacobian-admm}), which does not use either proximal terms or correction steps.
The condition only depends on the coefficient matrices $A_i$, without imposing further assumptions on the objective functions $f_i$ or  the penalty parameter $\rho$.
For the Gauss-Seidel ADMM (Algorithm \ref{alg:admm}), a sufficient condition for convergence
is provided in \cite{chen2013direct} for the special case $N=3$,
assuming two of the three coefficient matrices are
orthogonal. Our condition does not require exact orthogonality.
Instead, we mainly assume that the matrices
$A_i,~i=1,2,\ldots, N$ are mutually ``near-orthogonal" and have full column-rank.

\begin{theorem}
\label{thm:Jacobian_conv}
Suppose that there exists $\delta>0$ such that
\beq \label{JADMM_cond}
\|A_i^\top A_j\|\le \delta,~\forall~i\ne j,~\mbox{and }
\lambda_{min}(A_i^\top A_i)> 3(N-1)\delta,~\forall~i,
\eeq
where $\lambda_{min}(A_i^\top A_i)$ denotes the smallest eigenvalue of $A_i^\top A_i$.
Then the sequence $\{\bfu^k\}$ generated by Algorithm \ref{alg:Jacobian-admm} converges to a solution $\bfu^*$ to the problem \eqref{multivariateOptimization}.
\end{theorem}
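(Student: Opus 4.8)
The plan is to treat Algorithm~\ref{alg:Jacobian-admm} as the special case $P_i=0$, $\gamma=1$ of Proximal Jacobian ADMM and to reuse the energy $\|\bfu^k-\bfu^*\|_G^2$, now with $G_x=\rho\,\mathrm{diag}(A_1^\top A_1,\dots,A_N^\top A_N)$ and $G=\mathrm{diag}(G_x,\tfrac1\rho\vI)$. Since the second line of \eqref{cond2} cannot hold when all $P_i=0$, Lemma~\ref{lem:contraction2} does not apply, and the whole point is to replace the condition on $\{P_i\}$ by the near-orthogonality \eqref{JADMM_cond}. A first, purely linear-algebraic consequence of \eqref{JADMM_cond} that I would record at the outset is coercivity of $A$: for every $\bfz=(\bfz_1,\dots,\bfz_N)$, block diagonal dominance gives $\|A\bfz\|^2\ge(\min_i\lambda_{min}(A_i^\top A_i)-(N-1)\delta)\|\bfz\|^2\ge 2(N-1)\delta\,\|\bfz\|^2$, so $A$ has full column rank, $G\succ0$, and $\|A(\bfx^k-\bfx^*)\|\to0$ will force $\bfx^k\to\bfx^*$.

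Next I would run the derivation of Lemma~\ref{lem:uopt} with $P_i=0$, $\gamma=1$ (so that $\hat\lambda=\lambda^{k+1}$), obtaining the exact identity $\|\bfu^k-\bfu^*\|_G^2-\|\bfu^{k+1}-\bfu^*\|_G^2=h(\bfu^k,\bfu^{k+1})$ with $h=\|\bfu^k-\bfu^{k+1}\|_G^2+2(\lambda^k-\lambda^{k+1})^\top A(\bfx^k-\bfx^{k+1})$. The crucial simplification, using $\lambda^k-\lambda^{k+1}=\rho A(\bfx^{k+1}-\bfx^*)$, is that the diagonal blocks cancel and $h$ collapses to $h(\bfu^k,\bfu^{k+1})=\rho\|A(\bfx^k-\bfx^*)\|^2-\rho\sum_{i\ne j}(A_i\Delta\bfx_i^{k+1})^\top(A_j\Delta\bfx_j^{k+1})$, where $\Delta\bfx_i^{k+1}=\bfx_i^k-\bfx_i^{k+1}$. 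When the $A_i$ are exactly orthogonal ($\delta=0$) the second term vanishes and one has clean Fej\'er monotonicity; the role of \eqref{JADMM_cond} is to control this term, $|\sum_{i\ne j}(A_i\Delta\bfx_i^{k+1})^\top(A_j\Delta\bfx_j^{k+1})|\le(N-1)\delta\,\|\bfx^k-\bfx^{k+1}\|^2$.

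The main obstacle is precisely that this error term has the wrong sign, so $\|\bfu^k-\bfu^*\|_G^2$ is only \emph{almost} monotone: I can conclude only $\|\bfu^{k+1}-\bfu^*\|_G^2\le\|\bfu^k-\bfu^*\|_G^2-\rho\|A(\bfx^k-\bfx^*)\|^2+\rho(N-1)\delta\,\|\bfx^k-\bfx^{k+1}\|^2$. To make the perturbation summable I would prove a second, successive-difference estimate by applying the monotonicity Lemma~\ref{lem:convex} to the optimality conditions \eqref{xiopt2} at iterations $k$ and $k+1$ (the analogue of Lemma~\ref{lem:mono2}); using $\Delta\lambda^{k+1}=\Delta\lambda^k-\rho A\Delta\bfx^{k+1}$ this yields $\tfrac1{2\rho}\|\Delta\lambda^k\|^2-\tfrac1{2\rho}\|\Delta\lambda^{k+1}\|^2\ge\tfrac\rho2\|A\Delta\bfx^{k+1}\|^2+\rho\sum_{i\ne j}(A_i\Delta\bfx_i^{k+1})^\top A_j(\Delta\bfx_j^k-\Delta\bfx_j^{k+1})$. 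Here the reservoir $\tfrac\rho2\|A\Delta\bfx^{k+1}\|^2\ge\rho(N-1)\delta\,\|\Delta\bfx^{k+1}\|^2$ is used to dominate the off-diagonal coupling after summation; telescoping in $\|\Delta\lambda\|^2$ then gives $\sum_k\|A\Delta\bfx^{k+1}\|^2<\infty$ and hence, by coercivity, $\sum_k\|\bfx^k-\bfx^{k+1}\|^2<\infty$. This is where the slack in \eqref{JADMM_cond} is spent: one factor $(N-1)\delta$ is absorbed by coercivity, one by the perturbation in the almost-monotone estimate, and one by the cross term in the successive-difference estimate, which is exactly why $\lambda_{min}(A_i^\top A_i)>3(N-1)\delta$ is the natural threshold; I expect the delicate (but routine) book-keeping of these three budgets, via Cauchy--Schwarz across consecutive iterates, to be the technical heart of the proof.

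Finally I would assemble the pieces. Summing the almost-monotone estimate over $k$ and invoking $\sum_k\|\bfx^k-\bfx^{k+1}\|^2<\infty$ shows $\{\bfu^k\}$ is bounded and $\sum_k\|A(\bfx^k-\bfx^*)\|^2<\infty$, so $\bfx^k\to\bfx^*$ and $\Delta\lambda^{k+1}=\rho A(\bfx^{k+1}-\bfx^*)\to0$. The same estimate makes $\|\bfu^k-\bfu^*\|_G^2$ quasi-Fej\'er (monotone up to a summable term), hence convergent for every saddle point. Extracting a subsequence of the bounded $\{\lambda^k\}$, passing to the limit in \eqref{xiopt2} with $\Delta\bfx\to0$ and using closedness of each $\partial f_i$ produces a limit $\bar\lambda$ with $(\bfx^*,\bar\lambda)$ satisfying the KKT conditions \eqref{xsopt2}--\eqref{lsopt2}; applying the quasi-Fej\'er property to this particular solution, together with the subsequential convergence to it, upgrades the result to $\bfu^k\to(\bfx^*,\bar\lambda)$, which completes the proof.
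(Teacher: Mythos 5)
Your setup coincides with the paper's: both specialize Lemma~\ref{lem:uopt} to $P_i=0$, $\gamma=1$ (this is exactly Lemma~\ref{lem:uopt0}), and your ``collapsed'' expression for $h_0$ is algebraically identical to the paper's expansion $\tfrac{1}{\rho}h_0=\sum_i\|A_i\bfa_i^k\|^2+2\sum_{i\ne j}\langle A_i\bfa_i^{k+1},A_j\bfa_j^k\rangle-\sum_{i\ne j}\langle A_i\bfa_i^{k+1},A_j\bfa_j^{k+1}\rangle$, where $\bfa^k=\bfx^k-\bfx^*$ (minor point: Lemma~\ref{lem:uopt} gives an inequality, not an identity). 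The divergence --- and the gap --- is in how you control the indefinite cross term. You bound it by $(N-1)\delta\|\bfx^k-\bfx^{k+1}\|^2$ and then plan to make this perturbation summable via a successive-difference estimate telescoping in $\|\lambda^k-\lambda^{k+1}\|^2$. That estimate does not close under \eqref{JADMM_cond}. Writing $\mu:=\min_i\lambda_{\min}(A_i^\top A_i)$, your reservoir is $\tfrac{\rho}{2}\|A(\bfx^k-\bfx^{k+1})\|^2\ge\tfrac{\rho}{2}(\mu-(N-1)\delta)\|\bfx^k-\bfx^{k+1}\|^2$, i.e.\ barely more than $\rho(N-1)\delta\|\bfx^k-\bfx^{k+1}\|^2$ when $\mu>3(N-1)\delta$; but the coupling term $\sum_{i\ne j}\langle A_i(\bfx_i^k-\bfx_i^{k+1}),A_j\bigl((\bfx_j^{k-1}-\bfx_j^{k})-(\bfx_j^k-\bfx_j^{k+1})\bigr)\rangle$ costs, after Cauchy--Schwarz and after telescoping away its backward half, at least $2(N-1)\delta\|\bfx^k-\bfx^{k+1}\|^2$. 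So you would need roughly $\mu>5(N-1)\delta$ to extract $\sum_k\|\bfx^k-\bfx^{k+1}\|^2<\infty$; the ``three budgets'' you describe do not add up under the stated threshold $3(N-1)\delta$. The same factor-of-two loss occurs if you instead try $\|\bfx^k-\bfx^{k+1}\|^2\le2\|\bfa^k\|^2+2\|\bfa^{k+1}\|^2$.

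The paper avoids the successive-difference route entirely. It bounds the two cross sums directly in terms of $\|\bfa^k\|^2$ and $\|\bfa^{k+1}\|^2$ (not of $\|\bfx^k-\bfx^{k+1}\|^2$), giving $\tfrac1\rho h_0\ge\sum_i\|A_i\bfa_i^k\|^2-(N-1)\delta\|\bfa^k\|^2-2(N-1)\delta\|\bfa^{k+1}\|^2$, and absorbs the $\|\bfa^{k+1}\|^2$ excess into the corrected Lyapunov function $\vb^k=\|\bfu^k-\bfu^*\|_{G_0}^2-2(N-1)\delta\rho\|\bfa^k\|^2$ of \eqref{bk}. This yields the genuinely monotone inequality \eqref{uopt_bd} with nonnegative decrease $\vd^k=\rho\sum_i\|A_i\bfa_i^k\|^2-3(N-1)\delta\rho\|\bfa^k\|^2$ --- which is where the constant $3$ actually comes from --- and since $\vd^k$ is bounded below by a multiple of $\|\bfa^k\|^2$ itself, $\vd^k\to0$ gives $\bfx^k\to\bfx^*$ directly, with no summability of successive differences needed. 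Your closing argument for $\lambda^k$ (boundedness, a convergent subsequence, passage to the limit in the optimality conditions, then upgrading via the monotone energy) matches the paper's and is fine once the contraction step is repaired.
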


The proof technique is motivated by the contraction analysis of the sequence $\{\bfu^k\}$ under some $G$-norm (e.g., see \cite{he2012on,deng2012linear,he2013full}).
To prove the theorem, we first need the following lemma:

\begin{lemma} \label{lem:uopt0}
Let
$$G_0:=\begin{pmatrix}
\rho A_1^\top A_1 &\quad &\quad &\quad\\
\quad &\ddots &\quad &\quad\\
\quad &\quad &\rho A_N^\top A_N &\quad\\
\quad &\quad &\quad &\frac{1}{\rho}\vI
\end{pmatrix},$$
where $\vI$ is the identity matrix of size $m \times m$.
For $k\ge 1$, we have
\beq \label{uopt0}
\|\bfu^k-\bfu^*\|_{G_0}^2-\|\bfu^{k+1}-\bfu^*\|_{G_0}^2\geq h_0(\bfu^k,~\bfu^{k+1}),
\eeq
where
\beq \label{h0}
h_0(\bfu^k,~\bfu^{k+1}):=\|\bfu^k-\bfu^{k+1}\|_{G_0}^2+2(\lambda^k-\lambda^{k+1})^\top A(\bfx^k- \bfx^{k+1}).
\eeq
\end{lemma}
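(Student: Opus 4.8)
The plan is to exploit the fact that Jacobian ADMM (Algorithm \ref{alg:Jacobian-admm}) is exactly Proximal Jacobian ADMM (Algorithm \ref{Parallel_N}) in the special case $P_i = 0$ for all $i$ and $\gamma = 1$. Under this specialization, $G_x$ reduces to the block-diagonal matrix whose $i$-th block is $\rho A_i^\top A_i$, and $G$ becomes precisely $G_0$. Thus I expect \eqref{uopt0} to follow from Lemma \ref{lem:uopt} essentially by substitution. Nevertheless, to keep this section self-contained and independent of the proximal analysis, I would reproduce the short derivation directly.

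First I would write down the optimality condition of the $\bfx_i$-subproblem in Algorithm \ref{alg:Jacobian-admm}, which is \eqref{xiopt} with $P_i = 0$, namely
\[
A_i^\top\left(\lambda^k - \rho\left(A_i\bfx_i^{k+1} + \sum_{j\neq i}A_j\bfx_j^k - c\right)\right) \in \partial f_i(\bfx_i^{k+1}).
\]
The key simplification is that, because $\gamma = 1$, the dual update reads $\lambda^{k+1} = \lambda^k - \rho(A\bfx^{k+1}-c)$, so the auxiliary multiplier $\hat\lambda := \lambda^k - \rho(A\bfx^{k+1}-c)$ from the proof of Lemma \ref{lem:uopt} coincides with $\lambda^{k+1}$. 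Rewriting the optimality condition with $\hat\lambda = \lambda^{k+1}$ gives the analog of \eqref{xiopt2},
\[
A_i^\top\left(\lambda^{k+1} - \rho\sum_{j\neq i}A_j(\bfx_j^k - \bfx_j^{k+1})\right) \in \partial f_i(\bfx_i^{k+1}).
\]

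Next I would invoke monotonicity of the subdifferential (Lemma \ref{lem:convex}) between this condition at $\bfx_i^{k+1}$ and the KKT relation $A_i^\top\lambda^* \in \partial f_i(\bfx_i^*)$ from \eqref{xsopt2}, sum the inequalities over $i$, and eliminate the coupling via the identity $\sum_{j\neq i}A_j(\bfx_j^k - \bfx_j^{k+1}) = A(\bfx^k-\bfx^{k+1}) - A_i(\bfx_i^k-\bfx_i^{k+1})$. Substituting the elementary relations $A(\bfx^{k+1}-\bfx^*) = \frac{1}{\rho}(\lambda^k - \lambda^{k+1})$ and $\hat\lambda - \lambda^* = \lambda^{k+1}-\lambda^*$ then yields the cross-term inequality
\[
(\bfu^k - \bfu^{k+1})^\top G_0(\bfu^{k+1}-\bfu^*) \geq (\lambda^k-\lambda^{k+1})^\top A(\bfx^k-\bfx^{k+1}),
\]
which is the $\gamma=1$, $P_i=0$ specialization of \eqref{xiopt4}. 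Finally, applying the polarization identity $\|\bfu^k-\bfu^*\|_{G_0}^2 - \|\bfu^{k+1}-\bfu^*\|_{G_0}^2 = 2(\bfu^k-\bfu^{k+1})^\top G_0(\bfu^{k+1}-\bfu^*) + \|\bfu^k-\bfu^{k+1}\|_{G_0}^2$ and inserting the cross-term bound produces \eqref{uopt0} with $h_0$ as in \eqref{h0}.

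I do not anticipate a genuine obstacle here, since every step mirrors the proof of Lemma \ref{lem:uopt}; the effort is purely in bookkeeping. The one point demanding care is verifying that the blockwise $\rho A_i^\top A_i$ terms in $\bfx$ and the $\frac{1}{\rho}\vI$ term in $\lambda$ assemble correctly into the single $G_0$-norm $\|\bfu^k-\bfu^{k+1}\|_{G_0}^2$, and observing that the vanishing of the $\frac{1-\gamma}{\gamma^2\rho}\|\lambda^k-\lambda^{k+1}\|^2$ contribution at $\gamma=1$ is exactly what leaves $h_0$ with its clean leading term $\|\bfu^k-\bfu^{k+1}\|_{G_0}^2$, as stated.
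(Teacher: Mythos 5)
Your proposal is correct and matches the paper exactly: the paper's own proof of Lemma \ref{lem:uopt0} is the one-line observation that it is the special case of Lemma \ref{lem:uopt} with $\gamma=1$ and $P_i=0$ for all $i$, under which $G$ reduces to $G_0$ and $h$ reduces to $h_0$. Your additional explicit re-derivation is just an unwinding of the same argument and introduces nothing different.
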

This lemma follows directly from Lemma \ref{lem:uopt} since it is a special case with $\gamma=1$ and $P_i=0,~\forall i$. Now we are ready to prove the theorem.

\begin{proof}[Proof of Theorem \ref{thm:Jacobian_conv}]
By the assumption $\|A_i^\top A_j\|\le \delta,~i\ne j$, we have
\beq \label{cauchy2}
\left|\sum_{i\ne j}\langle A_i\bfa_i,A_j\bfb_j\rangle\right|\le \sum_{i\ne j} \delta\|\bfa_i\|\|\bfb_j\|\le
\frac{\delta}{2}(N-1)(\|\bfa\|^2+\|\bfb\|^2),~\forall~\bfa,\bfb
\eeq
To simplify the notation, we let
\beq
\bfa_i^k:=\bfx_i^k-\bfx_i^*,~i=1,2,\ldots,N.
\eeq
Note that
\[\lambda^k-\lambda^{k+1} = \rho A\bfa^{k+1},~\bfx^k-\bfx^{k+1}=\bfa^k-\bfa^{k+1}.\]
Then, we can rewrite \eqref{h0} as
\begin{align}
\frac{1}{\rho}h_0(\bfu^k-\bfu^{k+1}) &= \sum_i \|A_i(\bfa_i^k-\bfa_i^{k+1})\|^2 + \|A\bfa^{k+1}\|^2 + 2\langle A
\bfa^{k+1},~A(\bfa^k-\bfa^{k+1}) \rangle\\
&= \sum_i\|A_i\bfa_i^k\|^2 + 2\sum_{i\ne j} \langle A_i\bfa_i^{k+1},A_j\bfa_j^k\rangle-\sum_{i\ne j}\langle A_i
\bfa_i^{k+1},A_j\bfa_j^{k+1}\rangle\\
\label{h0_ineq}
&\geq \sum_i\|A_i\bfa_i^k\|^2 - (N-1)\delta (\|\bfa^{k+1}\|^2+\|\bfa^k\|^2) -(N-1)\delta\|\bfa^{k+1}\|^2\\
&= \sum_i\|A_i\bfa_i^k\|^2 - (N-1)\delta\|\bfa^k\|^2 - 2(N-1)\delta\|\bfa^{k+1}\|^2,
\end{align}
where the inequality \eqref{h0_ineq} comes from \eqref{cauchy2}. By Lemma \ref{lem:uopt0}, we have
\begin{align} \label{uopt_e}
\nonumber
&\|\bfu^k-\bfu^*\|_{G_0}^2- 2(N-1)\delta \rho\|\bfa^{k}\|^2\\ \geq&\|\bfu^{k+1}-\bfu^*\|_{G_0}^2- 2(N-1)
\delta \rho\|\bfa^{k+1}\|^2+\rho\sum_i\|A_i\bfa_i^k\|^2 - 3(N-1)\delta \rho\|\bfa^{k}\|^2.
\end{align}
We further simplify \eqref{uopt_e} as
\beq \label{uopt_bd}
\vb^k-\vb^{k+1}\geq \vd^k,
\eeq
where the sequences $\{\vb^k\}$ and $\{\vd^k\}$ are defined by
\begin{align}
\label{bk}
\bfb^k&:= \|\bfu^k-\bfu^*\|_{G_0}^2- 2(N-1)\delta\rho\|\bfa^{k}\|^2,\\
\vd^k&:=\rho\sum_i\|A_i\bfa_i^k\|^2 - 3(N-1)\delta \rho\|\bfa^{k}\|^2.
\end{align}
By the definition of $G_0$, we have
\beq \label{bk2}
\vb^k = \rho \sum_i \|A_i\bfa_i^k\|^2-2(N-1)\delta \rho \|\bfa_i^k\|^2+\frac{1}{\rho}\|\lambda^k-\lambda^*\|^2.
\eeq
Since we assume $\lambda_{\min}(A_i^\top A_i)>3(N-1)\delta$, it follows that
\beq
\|A_i\bfa_i^k\|^2 \geq 3(N-1)\delta\|\bfa_i^{k}\|^2,~\forall i.
\eeq
Then it is easy to see that $\vb^k\geq 0$ and $\vd^k \geq 0$.
By \eqref{uopt_bd}, the nonnegative sequence $\{\vb^k\}$ is monotonically non-increasing. Hence, $\{\vb^k\}$
converges to some $\vb^*\geq 0$. By \eqref{uopt_bd}, it also follows that $\vd^k \to 0$. Therefore, $\va^k \to
0$, i.e., $\bfx^k \to \bfx^*$.

Next we show $\lambda^k\to\lambda^*$. By taking limit of \eqref{bk2} and using $\va^k \to 0$, we have
\beq \label{b*}
\vb^*=\lim_{k\to\infty}\vb^k=\lim_{k\to\infty}\frac{1}{\rho}\|\lambda^k-\lambda^*\|^2.
\eeq
To show $\lambda^k\to\lambda^*$, it thus suffices to show $\vb^*=0$.

By \eqref{b*}, $\{\lambda^k\}$ is bounded and must have a convergent subsequence $\lambda^{k_j}\to \bar{\lambda}$.
Recall the optimality conditions for the $\bfx_i$-subproblems \eqref{Jacobian update}:
\beq
\label{xiopt0}
A_i^\top\left(\lambda^k-\rho(A_i\bfx^{k+1}_i+\sum_{j\neq i}A_j\bfx_j^k-c)\right)\in \partial f_i(\bfx_i^{k+1}).
\eeq
By Theorem 24.4 of \cite{rockafellar1997convex}, taking limit over the subsequence $\{k_j\}$ on both sides of
\eqref{xiopt0} yields:
\beq
A_i^\top\bar{\lambda}\in \partial f_i(\bfx_i^*),~\forall~i.
\eeq
Therefore, $(\bfx^*,\bar{\lambda})$ satisfies the KKT conditions of the problem \eqref{multivariateOptimization}.
Since $(\bfx^*,\lambda^*)$ is any KKT point, now we let $\lambda^*=\bar{\lambda}$. By \eqref{b*} and $\|\lambda^{k_j}-\lambda^*\|^2\to 0$, we must have $\vb^* = 0$, thereby completing the proof.
\end{proof}

Under the similar near-orthogonality assumption on the matrices $A_i,~i=1,2,\ldots,N$, we have the following convergence result for Proximal Jacobian ADMM:
\begin{theorem}
Suppose $\|A_i^\top A_j\|\leq \delta$ for all $i\neq j$, and the parameters in Algorithm \ref{Parallel_N} satisfy the following condition:
for some $\alpha,\beta>0$,
\begin{equation} \label{cond3}
\left\{\begin{array}{l}
P_i\succ \rho(\frac{1}{\alpha}-1)A_i^\top A_i + \frac{\rho}{\beta}\delta(N-1)\vI\\
\lambda_{\min}(A_i^\top A_i) > \frac{2-\gamma+\beta}{2-\gamma-\alpha}\delta(N-1)
\end{array}\right.
\mbox{for } i=1,\ldots,N.
\end{equation}
Then Algorithm \ref{Parallel_N} converges to a solution to the problem
\eqref{multivariateOptimization}.
\end{theorem}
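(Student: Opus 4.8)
The plan is to follow the same contraction-plus-Lyapunov strategy used for Theorem~\ref{thm:Jacobian_conv}, but starting from the general decrease inequality of Lemma~\ref{lem:uopt} rather than from its $\gamma=1$, $P_i=0$ specialization. Concretely, I would begin with $\|\bfu^k-\bfu^*\|_G^2-\|\bfu^{k+1}-\bfu^*\|_G^2\geq h(\bfu^k,\bfu^{k+1})$ and then, exactly as in the proof of Theorem~\ref{thm:Jacobian_conv}, introduce $\bfa_i^k:=\bfx_i^k-\bfx_i^*$ and substitute the dual relation $\lambda^k-\lambda^{k+1}=\gamma\rho A\bfa^{k+1}$ together with $\bfx^k-\bfx^{k+1}=\bfa^k-\bfa^{k+1}$. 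The goal is to rewrite $h$ purely in terms of the quantities $A_i\bfa_i^k$ and $A_i\bfa_i^{k+1}$, isolating a diagonal part that telescopes and an off-diagonal part that the near-orthogonality assumption will control.

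Next I would expand $h$. The $\rho A_i^\top A_i$ blocks inside $G_x$ contribute $\rho\sum_i\|A_i(\bfa_i^k-\bfa_i^{k+1})\|^2$, whose mixed inner products exactly cancel the diagonal part of the cross term $2\langle A\bfa^{k+1},A(\bfa^k-\bfa^{k+1})\rangle$, just as in Theorem~\ref{thm:Jacobian_conv}. What remains is a sum of pure-$k$ and pure-$(k+1)$ diagonal terms $\sum_i\|A_i\bfa_i^k\|^2$ and $(1-\gamma)\sum_i\|A_i\bfa_i^{k+1}\|^2$, the nonnegative proximal terms $\frac1\rho\|\bfx_i^k-\bfx_i^{k+1}\|_{P_i}^2$, and two off-diagonal cross sums $\sum_{i\neq j}\langle A_i\bfa_i^{k+1},A_j\bfa_j^k\rangle$ and $\sum_{i\neq j}\langle A_i\bfa_i^{k+1},A_j\bfa_j^{k+1}\rangle$. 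I would bound both off-diagonal sums with the near-orthogonality estimate \eqref{cauchy2}, but in a \emph{weighted} Young form so that the two free parameters $\alpha$ and $\beta$ appear: $\alpha$ should pair the off-diagonal mass against the $A_i^\top A_i$ content (to be dominated by the $\rho(\frac1\alpha-1)A_i^\top A_i$ piece of $P_i$), while $\beta$ governs the $\delta(N-1)\vI$ remainder (dominated by the $\frac\rho\beta\delta(N-1)\vI$ piece of $P_i$). This is precisely why the first line of \eqref{cond3} takes its stated form.

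With the cross terms absorbed, the inequality should collapse to $\vb^k-\vb^{k+1}\geq\vd^k$, where $\vb^k:=\|\bfu^k-\bfu^*\|_G^2-C\|\bfa^k\|^2$ for the appropriate constant $C$ and $\vd^k$ is a nonnegative combination of $\|A_i\bfa_i^k\|^2$ and $\|\bfa_i^k\|^2$. The eigenvalue condition $\lambda_{\min}(A_i^\top A_i)>\frac{2-\gamma+\beta}{2-\gamma-\alpha}\delta(N-1)$ is exactly what is needed to certify simultaneously that $\vb^k\geq0$ and $\vd^k\geq0$, via $\|A_i\bfa_i^k\|^2\geq\lambda_{\min}(A_i^\top A_i)\|\bfa_i^k\|^2$. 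Then $\{\vb^k\}$ is nonnegative and monotonically non-increasing, hence convergent, forcing $\vd^k\to0$ and therefore $\bfa^k\to0$, i.e. $\bfx^k\to\bfx^*$. Finally I would recover $\lambda^k\to\lambda^*$ by repeating the last part of the proof of Theorem~\ref{thm:Jacobian_conv}: $\{\lambda^k\}$ is bounded, a convergent subsequence $\lambda^{k_j}\to\bar\lambda$ exists, passing to the limit in the optimality conditions \eqref{xiopt2} shows $(\bfx^*,\bar\lambda)$ is a KKT point, and since $\lambda^*$ may be taken to be this point the residual forces $\lambda^k\to\lambda^*$.

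The delicate step will be the weighted-Young bookkeeping in the second paragraph: the $(2-\gamma)$ ``budget'' originating from the $\frac{2-\gamma}{\rho\gamma^2}\|\lambda^k-\lambda^{k+1}\|^2$ term must be split against the two cross sums so that the leftover coefficient multiplying $\|\bfa_i^k\|^2$ is exactly $\frac{2-\gamma+\beta}{2-\gamma-\alpha}\delta(N-1)$, matching the threshold in \eqref{cond3}; this requires choosing the Young weights consistently with both $\alpha$ and $\beta$ and carefully tracking the sign of the $(1-\gamma)\sum_i\|A_i\bfa_i^{k+1}\|^2$ term, which becomes negative when $\gamma>1$ and must then also be charged against the eigenvalue margin.
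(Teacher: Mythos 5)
Your overall strategy (contraction under a $G$-type norm plus weighted near-orthogonality estimates) is the right family of argument, but your specific decomposition diverges from the paper's and, as written, contains a gap. The paper does \emph{not} redo the telescoping of Theorem~\ref{thm:Jacobian_conv}. It works directly with $h(\bfu^k,\bfu^{k+1})$ from Lemma~\ref{lem:uopt}, keeps the cross term in the mixed form $\frac{2}{\gamma}(\lambda^k-\lambda^{k+1})^\top A(\bfx^k-\bfx^{k+1})=2\rho(\bfx^{k+1}-\bfx^*)^\top A^\top A(\bfx^k-\bfx^{k+1})$, and applies a weighted Young bound with the two factors being $\bfx^{k+1}-\bfx^*$ and $\bfx^k-\bfx^{k+1}$. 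The penalty charged to the $\bfx^k-\bfx^{k+1}$ factor, namely $\rho\|\bfx^k-\bfx^{k+1}\|^2_{[\frac{1}{\alpha}H+\frac{1}{\beta}\delta(N-1)\vI]}$ with $H=\Diag(A_i^\top A_i)$, is absorbed by the \emph{entire} block $P_i+\rho A_i^\top A_i$ of $G_x$ --- this is exactly why the first line of \eqref{cond3} reads $P_i\succ\rho(\frac{1}{\alpha}-1)A_i^\top A_i+\frac{\rho}{\beta}\delta(N-1)\vI$. The penalty charged to the $\bfx^{k+1}-\bfx^*$ factor is absorbed by the budget $(2-\gamma)\rho\|A(\bfx^{k+1}-\bfx^*)\|^2\geq(2-\gamma)\rho\|\bfx^{k+1}-\bfx^*\|^2_{[H-\delta(N-1)\vI]}$ coming from the $\frac{2-\gamma}{\rho\gamma^2}\|\lambda^k-\lambda^{k+1}\|^2$ term; comparing $(2-\gamma)[H-\delta(N-1)\vI]$ with $\alpha H+\beta\delta(N-1)\vI$ gives precisely the eigenvalue threshold in \eqref{cond3}. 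The conclusion is then the strict contraction \eqref{h_bound}--\eqref{contr2} with the unmodified Lyapunov function $\|\bfu^k-\bfu^*\|_G^2$; no shifted function $\vb^k$ and no subsequence argument for $\lambda$ are needed.

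The gap in your plan is a double spending of the $\rho A_i^\top A_i$ part of $G_x$. If you first perform the diagonal telescoping (combining $\rho\sum_i\|A_i(\bfa_i^k-\bfa_i^{k+1})\|^2$ with the diagonal parts of the cross term and of $(2-\gamma)\rho\|A\bfa^{k+1}\|^2$ collapses them to $\rho\sum_i\|A_i\bfa_i^k\|^2+(1-\gamma)\rho\sum_i\|A_i\bfa_i^{k+1}\|^2$), then the $\rho A_i^\top A_i$ blocks are consumed and only $\sum_i\|\Dx_i^{k+1}\|_{P_i}^2$ survives to absorb Young penalties. But the remaining off-diagonal sums involve only $\bfa^{k}$ and $\bfa^{k+1}$, so their Young penalties land on $\|\bfa_i^k\|^2$, $\|\bfa_i^{k+1}\|^2$ and $\|A_i\bfa_i^{k+1}\|^2$, which $\|\Dx_i^{k+1}\|_{P_i}^2$ cannot dominate. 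Your stated intention to have the ``$\rho(\frac{1}{\alpha}-1)A_i^\top A_i$ piece of $P_i$'' absorb off-diagonal mass only makes sense as the statement $P_i+\rho A_i^\top A_i\succ\frac{\rho}{\alpha}A_i^\top A_i+\frac{\rho}{\beta}\delta(N-1)\vI$ applied to $\Dx_i^{k+1}$, i.e., it presupposes that the $\rho A_i^\top A_i$ blocks have \emph{not} already been telescoped away. So either you drop the telescoping and recover the paper's argument, or you keep it and arrive (if at all) at a sufficient condition of a different form than \eqref{cond3}, together with the modified Lyapunov function and the extra $(1-\gamma)$ sign complication you flagged. The concluding subsequence argument for $\lambda^k\to\lambda^*$ is correct but becomes unnecessary once \eqref{contr2} is in hand.
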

\begin{proof}
Let
$$H:=\begin{pmatrix}
A_1^\top A_1 &\quad &\quad\\
\quad &\ddots &\quad\\
\quad &\quad &A_N^\top A_N
\end{pmatrix}.$$
If $\|A_i^\top A_j\|\leq \delta$ for all $i\neq j$, then it is easy to show the following: for any $\bfx$ and
$\bfy$,
\begin{align}
\|A\bfx\|^2 &= \sum_{i=1}^{N}\|A_i\bfx_i\|^2+\sum_{i\neq j}\bfx_i^\top A_i^\top A_j \bfx_j
\geq \sum_{i=1}^{N}\|A_i\bfx_i\|^2-\delta\sum_{i\neq j}\|\bfx_i\|\|\bfx_j\| \nonumber \\
&\geq \sum_{i=1}^{N}\|A_i\bfx_i\|^2-\delta (N-1)\|\bfx\|^2 = \|\bfx\|^2_{[H-\delta(N-1)\vI]},
\end{align}
and
\begin{align}
2\bfx^\top A^\top A\bfy &= 2\sum_{i=1}^{N}\bfx_i^\top A_i^\top A_j\bfy_j+2\sum_{i\neq j}\bfx_i^\top A_i^\top
A_j \bfy_j
\geq 2\sum_{i=1}^{N}\bfx_i^\top A_i^\top A_j\bfy_j-2\delta\sum_{i\neq j}\|\bfx_i\|\|\bfy_j\| \nonumber \\
&\geq -\sum_{i=1}^{N}\alpha\|A_i\bfx_i\|^2-\beta\delta (N-1)\|\bfx\|^2-\sum_{i=1}^{N}\frac{1}{\alpha}
\|A_i\bfy_i\|^2-\frac{1}{\beta}\delta(N-1)\|\bfy\|^2 \nonumber\\
&= -\|\bfx\|^2_{[\alpha H+\beta\delta(N-1)\vI]}-\|\bfy\|^2_{[\frac{1}{\alpha}H+\frac{1}{\beta}
\delta(N-1)\vI]}, ~\forall \alpha,\beta>0,
\end{align}
Using the above inequalities, we have
\begin{align}
&\frac{2}{\gamma}(\lambda^k-\lambda^{k+1})^\top A(\bfx^k-\bfx^{k+1}) = 2\rho(\bfx^{k+1}-\bfx^*)A^\top A(
\bfx^k-\bfx^{k+1})\nonumber\\
\geq &-\rho\|\bfx^{k+1}-\bfx^*\|^2_{[\alpha H+\beta\delta(N-1)\vI]}
-\rho\|\bfx^{k}-\bfx^{k+1}\|^2_{[\frac{1}{\alpha}H+\frac{1}{\beta}\delta(N-1)\vI]},
\end{align}
and
\begin{equation}
\|\lambda^k-\lambda^{k+1}\|^2 = \gamma^2\rho^2\|A(\bfx^{k+1}-\bfx^*)\|^2
\geq \gamma^2\rho^2\|\bfx^{k+1}-\bfx^*\|^2_{[H-\delta(N-1)\vI]}.
\end{equation}
Therefore,
\begin{align}
h(\bfu^k,\bfu^{k+1}) \geq &\|\bfx^k-\bfx^{k+1}\|^2_{G_x}+(2-\gamma)\rho\|\bfx^{k+1}-\bfx^*\|^2_{[H-
\delta(N-1)\vI]}\nonumber\\
&-\rho\|\bfx^{k+1}-\bfx^*\|^2_{[\alpha H+\beta\delta(N-1)\vI]}
-\rho\|\bfx^{k}-\bfx^{k+1}\|^2_{[\frac{1}{\alpha}H+\frac{1}{\beta}\delta(N-1)\vI]}.
\end{align}
As long as the following holds:
\begin{equation}
\left\{\begin{array}{l}
G_x\succ\frac{\rho}{\alpha}H+\frac{\rho}{\beta}\delta(N-1)\vI,\\
(2-\gamma)\rho[H-\delta(N-1)\vI]\succ \rho[\alpha H+\beta\delta(N-1)\vI],
\end{array}\right.
\end{equation}
which is equivalent to the condition \eqref{cond3}, there must exist some $\eta>0$ such that \eqref{h_bound} and \eqref{contr2}
hold. Then the convergence of Algorithm \ref{Parallel_N} follows immediately from the standard analysis of contraction methods \cite{he1997class}.
\end{proof}

\section{On $o(1/k)$ Convergence Rate of ADMM} \label{sec:ADMM_rate}
The convergence of the standard two-block ADMM has been long established in the literature \cite{gabay1976dual,glowinski1975approximation}.
Its convergence rate has been actively studied; see \cite{lions1979splitting,he2012on,he2012non,Goldstein2012fast,deng2012linear,luo2012linear}
and the references therein. In the following, we briefly review the convergence analysis for ADMM ($N=2$) and then improve the $O(1/k)$ convergence rate established in \cite{he2012non} slightly to $o(1/k)$ by using the same technique as in Subsection 2.2.

As suggested in \cite{he2012non}, the quantity $\|\vw^k-\vw^{k+1}\|_H^2$ can be used to measure the optimality of the iterations of ADMM , where $$
\vw:=\begin{pmatrix}
\bfx_2\\\lambda
\end{pmatrix},
~H:=\begin{pmatrix}
\rho A_2^\top A_2 &\quad\\
\quad &\frac{1}{\rho}\vI
\end{pmatrix},
$$
and $\vI$ is the identity matrix of size $m\times m$. Note that $\bfx_1$ is not part of $\vw$ because $\bfx_1$ can be regarded as an intermediate variable in the iterations of ADMM, whereas $(\bfx_2,\lambda)$ are the essential variables \cite{boyd2010distributed}. In fact, if $\|\vw^k-\vw^{k+1}\|_H^2=0$ then $\vw^{k+1}$ is optimal. The reasons are as follows.
Recall the subproblems of ADMM:
\begin{align}
\bfx_1^{k+1} &= \argmin_{\bfx_1}~f_1(\bfx_1)+\frac{\rho}{2}
\|A_1\bfx_1+A_2\bfx_2^k-\lambda^k/\rho\|^2,\\
\bfx_2^{k+1} &= \argmin_{\bfx_2}~f_2(\bfx_2)+\frac{\rho}{2}\|A_1\bfx^{k+1}_1+A_2
\bfx_2-\lambda^k/\rho\|^2.
\end{align}
By the formula for $\lambda^{k+1}$, their optimality conditions can be written as:
\begin{align}
\label{x1o}
A_1^\top\lambda^{k+1}-\rho A_1^\top A_2(\bfx_2^k-\bfx_2^{k+1})&\in \partial
f(\bfx_1^{k+1}),\\
\label{x2o}
A_2^\top\lambda^{k+1}&\in \partial f_2(\bfx_2^{k+1}).
\end{align}
In comparison with the KKT conditions \eqref{xsopt2} and \eqref{lsopt2}, we can see that $\bfu^{k+1}=\left(\bfx^{k+1}_1,\bfx^{k+1}_2,\lambda^{k+1}\right)$ is a solution of \eqref{multivariateOptimization} if and only if the following holds:
\begin{align}
\label{rp}
&\vr^{k+1}_p:=A_1\bfx_1^{k+1}+A_2\bfx_2^{k+1}-c = 0 \quad\mbox{(primal feasibility)},\\
\label{rd}
&\vr^{k+1}_d:=\rho A_1^\top A_2(\bfx_2^k-\bfx_2^{k+1}) = 0
\quad\quad\mbox{(dual feasibility)}.
\end{align}
By the update formula for $\lambda^{k+1}$, we can write $\vr_p$ equivalently as
\beq \label{admm3}
\vr^{k+1}_p = \frac{1}{\rho}(\lambda^k-\lambda^{k+1}).
\eeq
Clearly, if $\|\vw^k-\vw^{k+1}\|_H^2=0$ then the optimality conditions \eqref{rp} and \eqref{rd} are satisfied, so $\vw^{k+1}$ is a solution.
On the other hand, if $\|\vw^k-\vw^{k+1}\|_H^2$ is large, then $\vw^{k+1}$ is likely to be far away from being a solution.
Therefore, the quantity $\|\vw^k-\vw^{k+1}\|_H^2$ can be viewed as a measure of the distance between the iteration $\vw^{k+1}$ and the solution set.
Furthermore, based on the variational inequality \eqref{VI} and the variational characterization of the iterations of ADMM, it is reasonable to use the quadratic term $\|\vw^k-\vw^{k+1}\|_H^2$ rather than $\|\vw^k-\vw^{k+1}\|_H$ to measure the convergence rate of ADMM (see \cite{he2012non} for more details).

The work \cite{he2012non} proves that $\|\vw^k-\vw^{k+1}\|_H^2$ converges to zero at a rate of $O(1/k)$. The key steps of the proof are to establish the following properties:
\begin{itemize}
\item the sequence $\{\vw^k\}$ is contractive:\
\beq \label{xyopt}
\|\vw^k-\vw^*\|^2_H-\|\vw^{k+1}-\vw^*\|^2_H\geq\|\vw^{k}-\vw^{k+1}\|^2_H,
\eeq
\item the sequence $\|\vw^k-\vw^{k+1}\|_H^2$ is monotonically non-increasing:
\beq
\label{monotone2}
\|\vw^k-\vw^{k+1}\|^2_H\leq\|\vw^{k-1}-\vw^k\|^2_H.
\eeq
\end{itemize}
The contraction property \eqref{xyopt} has been long established and its proof dates back to \cite{gabay1976dual,glowinski1975approximation}.
Inspired by \cite{he2012non}, we provide a much shorter proof for \eqref{monotone2} than the one in \cite{he2012non}.

\begin{proof}[Proof of \eqref{monotone2}]
Let $\Dx_i^{k+1}=\bfx_i^k-\bfx_i^{k+1}$ and $\Dlambda^{k+1}=\lambda^k-\lambda^{k+1}$.
By Lemma \ref{lem:convex}, i.e., \eqref{convex}, the optimality condition \eqref{x1o} at the $k$-th and $(k+1)$-th iterations yields:
\beq
\langle \Dx_1^{k+1},~A_1^\top\Dlambda^{k+1}-\rho A_1^\top A_2(\Dx_2^k-\Dx_2^{k+1})\rangle \geq 0.
\eeq
Similarly for \eqref{x2o}, we obtain
\beq
\langle \Dx_2^{k+1},~A_2^\top\Dlambda^{k+1}\rangle \geq 0.
\eeq
Adding the above two inequalities together, we have
\beq \label{xyo2}
(A_1\Dx_1^{k+1}+A_2\Dx_2^{k+1})^\top\Dlambda^{k+1}-\rho (A_1\Dx_1^{k+1})^\top A_2(\Dx_2^k-\Dx_2^{k+1})\geq 0.
\eeq
Using the equality according to \eqref{admm3}:
\beq \label{admm3k}
A_1\Dx_1^{k+1}+A_2\Dx_2^{k+1} = \frac{1}{\rho}(\Dlambda^k-\Dlambda^{k+1}),
\eeq
\eqref{xyo2} becomes
\beq
\label{xyo3}
\frac{1}{\rho}(\Dlambda^k-\Dlambda^{k+1})^\top\Dlambda^{k+1}-(\Dlambda^k-\Dlambda^{k+1}-\rho A_2\Dx_2^{k+1})^
\top A_2(\Dx_2^k-\Dx_2^{k+1})\geq 0.
\eeq
After rearranging the terms, we get
\beq \label{xyo4}
\begin{split}
&(\sqrt{\rho}A_2\Dx_2^k+\frac{1}{\sqrt{\rho}}\Dlambda^k)^\top(\sqrt{\rho}A_2\Dx_2^{k+1}+\frac{1}{\sqrt{\rho}}
\Dlambda^{k+1})
-(A_2\Dx_2^k)^\top\Dlambda^k-(A_2\Dx_2^{k+1})^\top\Dlambda^{k+1}\\
&\geq \frac{1}{\rho}\|\Dlambda^{k+1}\|^2+\rho\|A_2\Dx_2^{k+1}\|^2=\|\vw^k-\vw^{k+1}\|_H^2.
\end{split}
\eeq

By the Cauchy-Schwarz inequality, we have
\beq
(a_1+b_1)^\top(a_2+b_2)\leq (\|a_1+b_1\|^2+\|a_2+b_2\|^2)/2,
\eeq
or equivalently,
\beq
(a_1+b_1)^\top(a_2+b_2)-a_1^\top b_1-a_2^\top b_2 \leq (\|a_1\|^2+\|b_1\|^2+\|a_2\|^2+\|b_2\|^2)/2.
\eeq
Applying the above inequality to the left-hand side of \eqref{xyo4}, we have
\beq
\begin{split}
\|\vw^k-\vw^{k+1}\|_H^2 &\leq \left(\rho\|A_2\Dx_2^{k}\|^2+\frac{1}{\rho}\|\Dlambda^{k}\|^2+\rho\|A_2\Dx_2^{k+1}
\|^2+\frac{1}{\rho}\|\Dlambda^{k+1}\|^2\right)/2\\
& = \left(\|\vw^{k-1}-\vw^{k}\|_H^2+\|\vw^k-\vw^{k+1}\|_H^2\right)/2,
\end{split}
\eeq
and thus \eqref{monotone2} follows immediately.
\end{proof}

We are now ready to improve the convergence rate from $O(1/k)$ to $o(1/k)$.
\begin{theorem}
\label{newrate}
The sequence $\{\vw^k\}$ generated by Algorithm~\ref{alg:admm} (for $N=2$) converges to a solution $\vw^*$ of
problem \eqref{multivariateOptimization} in the $H$-norm, i.e., $\|\vw^k-\vw^*\|_H^2\to0$, and
$\|\vw^k-\vw^{k+1}\|^2_H = o(1/k)$. Therefore,
\beq \label{o-rate}
\|A_1\bfx_1^k- A_1\bfx_1^{k+1}\|^2 + \|A_2\bfx_2^k - A_2\bfx_2^{k+1}\|^2 + \|\lambda^k- \lambda^{k+1}\|^2 =
o(1/k),
\eeq
for $k\to \infty$.
\end{theorem}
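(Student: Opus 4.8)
The plan is to reuse exactly the template of Theorem \ref{thm:paraADM_rate}, now built on the two facts already in hand for the two-block iteration: the contraction \eqref{xyopt} and the monotonicity \eqref{monotone2}, combined with the elementary Lemma \ref{lem:o}. First I would dispose of the $H$-norm convergence $\|\vw^k-\vw^*\|_H^2\to 0$. The contraction \eqref{xyopt} says that $\{\vw^k\}$ is Fej\'er monotone with respect to every solution, so it is bounded; any limit point of a convergent subsequence is a solution, obtained by passing to the limit in the optimality conditions \eqref{x1o}--\eqref{x2o} (as in the proof of Theorem \ref{thm:Jacobian_conv}), and Fej\'er monotonicity then promotes subsequential convergence to convergence of the full sequence. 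This is the classical part, already attributed in the text to \cite{gabay1976dual,glowinski1975approximation}, so I would cite it rather than reprove it.

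For the rate, I would telescope \eqref{xyopt} over $k$: since its right-hand side is nonnegative, $\sum_{k=1}^{\infty}\|\vw^k-\vw^{k+1}\|_H^2\le\|\vw^1-\vw^*\|_H^2<\infty$. Writing $a_k:=\|\vw^k-\vw^{k+1}\|_H^2$, we have $a_k\ge 0$, $\sum_k a_k<\infty$, and, by \eqref{monotone2}, $a_k$ is monotonically non-increasing. All three hypotheses of Lemma \ref{lem:o} are thus met, so $a_k=\|\vw^k-\vw^{k+1}\|_H^2=o(1/k)$.

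It remains to convert this into the componentwise bound \eqref{o-rate}. Expanding the block-diagonal $H$-norm with $\vw=(\bfx_2,\lambda)$ gives $\|\vw^k-\vw^{k+1}\|_H^2=\rho\|A_2\bfx_2^k-A_2\bfx_2^{k+1}\|^2+\frac{1}{\rho}\|\lambda^k-\lambda^{k+1}\|^2$, so the $A_2$- and $\lambda$-terms are each $o(1/k)$ at once. The only genuinely new step, and the main (if modest) obstacle, is the $A_1$-term, since $\bfx_1$ is deliberately excluded from $\vw$ and hence is not controlled directly by the $H$-norm. Here I would invoke the identity \eqref{admm3k}, which rearranges to $A_1(\bfx_1^k-\bfx_1^{k+1})=\frac{1}{\rho}(\lambda^{k-1}-\lambda^k)-\frac{1}{\rho}(\lambda^k-\lambda^{k+1})-(A_2\bfx_2^k-A_2\bfx_2^{k+1})$; applying $\|a+b+c\|^2\le 3(\|a\|^2+\|b\|^2+\|c\|^2)$ and noting that each summand is $o(1/k)$—the two consecutive $\lambda$-differences because $a_{k-1}$ and $a_k$ are both $o(1/k)$, and the $A_2$-difference by the previous step—gives $\|A_1\bfx_1^k-A_1\bfx_1^{k+1}\|^2=o(1/k)$. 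Adding the three $o(1/k)$ contributions yields \eqref{o-rate}.
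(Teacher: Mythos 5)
Your proposal is correct and follows essentially the same route as the paper's proof: telescoping the contraction \eqref{xyopt} for summability, invoking the monotonicity \eqref{monotone2} together with Lemma \ref{lem:o} to get $\|\vw^k-\vw^{k+1}\|_H^2=o(1/k)$, and recovering the $A_1$-term from \eqref{admm3k}. Your explicit three-term expansion of \eqref{admm3k} merely fills in a step the paper leaves implicit.
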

\begin{proof}
Using the contractive property of the sequence $\{\vw^k\}$ \eqref{xyopt} along with the optimality conditions,
the convergence of $\|\vw^k-\vw^*\|_H^2\to0$ follows from the standard analysis for contraction methods \cite{he1997class}.

By \eqref{xyopt}, we have
\beq
\sum_{k=1}^{n} \|\vw^k-\vw^{k+1}\|^2_H \leq \|\vw^{1}-\vw^*\|^2_H - \|\vw^{n+1}-\vw^*\|^2_H,~\forall n.
\eeq
Therefore, $\sum_{k=1}^{\infty} \|\vw^k-\vw^{k+1}\|^2_H<\infty$. By \eqref{monotone2},
$\|\vw^k-\vw^{k+1}\|^2_H$ is monotonically non-increasing and nonnegative. So Lemma \ref{lem:o} indicates that
$\|\vw^k-\vw^{k+1}\|^2_H = o(1/k)$, which further implies that $\|A_2\bfx_2^k - A_2\bfx_2^{k+1}\|^2 = o(1/k)$
and $\|\lambda^k- \lambda^{k+1}\|^2 = o(1/k)$. By \eqref{admm3k}, we also have $\|A_1\bfx_1^k-
A_1\bfx_1^{k+1}\|^2= o(1/k)$. Thus \eqref{o-rate} follows immediately.
\end{proof}
\begin{remark}
The proof technique based on Lemma \ref{lem:o} can be applied to improve some other existing convergence rates of $O(1/k)$ (e.g., \cite{he2013full,corman2013generalized}) to $o(1/k)$ as well.
\end{remark}

\section{Conclusion}
Due to the dramatically increasing demand for dealing with big data, parallel
and distributed computational methods are highly desirable.
ADMM, as a versatile algorithmic tool, has proven to be very effective at solving
many large-scale problems and well suited for distributed computing.
Yet, its parallelization still needs further investigation and improvement.
This paper proposes a simpler parallel and distributed ADMM for solving
problems with separable structures.
The algorithm framework introduces more flexibility for computing the
subproblems due to the use of proximal terms $\|\bfx_i-\bfx_i^k\|_{P_i}^2$ with
wisely chosen $P_i$. Its theoretical properties such as global convergence and
an $o(1/k)$ rate are established.
Our numerical results demonstrate the efficiency of the proposed method in comparison
with several existing parallel algorithms. The code will be available online for
further studies. In addition, we provide a simple sufficient condition to ensure the
convergence of Jacobian ADMM and demonstrate a simple technique to
improve the convergence rate of ADMM from $O(1/k)$ to $o(1/k)$.

\subsection*{Acknowledgements}
Wei Deng is supported by NSF grant ECCS-1028790.
Ming-Jun Lai is partially supported by a Simon collaboration grant for 2013--2018.
Wotao Yin is partially supported by NSF grants DMS-0748839 and DMS-1317602, and ARO/ARL MURI grant FA9550-10-1-0567.

\bibliographystyle{siam}
\bibliography{ADM,ADM_1}

\end{document}